\documentclass[12pt,a4paper]{amsart}
\usepackage{amsfonts}
\usepackage{amsthm}
\usepackage{amsmath}
\usepackage{amscd}
\usepackage[latin2]{inputenc}
\usepackage{t1enc}
\usepackage[mathscr]{eucal}
\usepackage{indentfirst}
\usepackage{graphicx}
\usepackage{graphics}
\usepackage{pict2e}
\usepackage{epic}
\numberwithin{equation}{section}
\usepackage[margin=3.0cm]{geometry}
\usepackage{epstopdf}   
\usepackage{bbm}
\usepackage{hyperref}
\hypersetup{
    colorlinks=true,
    linkcolor=blue,
    filecolor=magenta,      
    urlcolor=cyan,
}

\urlstyle{same}

\theoremstyle{plain}
\newtheorem{Th}{Theorem}[section]
\newtheorem{Lemma}[Th]{Lemma}
\newtheorem{Cor}[Th]{Corollary}
\newtheorem{Prop}[Th]{Proposition}

 \theoremstyle{definition}
\newtheorem{Def}[Th]{Definition}

\newtheorem{Rem}[Th]{Remark}
\newtheorem{?}[Th]{Problem}

\DeclareMathOperator{\ber}{Ber}
\pagestyle{plain}
\begin{document}
\title{On Properties of Random Binary Contingency Tables with Non-Uniform Margin}

\author[Da Wu]{Da Wu}

\address{Department of Mathematics\\ University of Pennsylvania\\ David Rittenhouse Laboratory\\ Philadelphia, PA, 19104-6395} 

\email{dawu@math.upenn.edu}

\subjclass[2010]{Primary: 60F05. Secondary: 60C05}

\keywords{Random Contingency Table; Binary Contingency Table; Maximum Entropy Principle}

\begin{abstract} 
In this paper, we study the random binary contingency tables with non-uniform margin. More precisely, for parameters $n,\delta,B,C$, we consider $X=(X_{ij})$ with $ X_{ij}\in \lbrace 0,1\rbrace$, the random binary contingency tables whose first $[n^\delta]$ rows and columns have margin $[BCn]$ and the rest columns and rows have margin $[Cn]$. We study various asymptotic properties of $X$ as $n\to \infty$. This answers a question posted by Barvinok in \cite{4}.   
\end{abstract}
\maketitle
\section{Introduction}
\subsection{Overview}
In Statistics, contingency tables model the dependence structure in large data set. Mathematically, it is simply a set of matrices with fixed row and column sum. Suppose both row sums and column sums depend on the dimension of the matrix and understanding the asymptotic behaviour of contingency tables (as dimension grows) is a challenging task. Combinatorists  are interested in providing a precise asymptotic formula for the cardinality of the contingency tables. In \cite{Canfield 1} , Canfield and Mckay used multi-variable Cauchy Integral Formula to solve the uniform margin case, i.e. all the row sums and column sums are equal (also called \textit{Magic Square} in \cite{R. Stanley}). Later on, Barvinok and Hartigan in \cite{Barvinok and Hartigan} proved a precise asymptotic formula for the non-uniform margin case probabilistically, using \textit{Maximum Entropy Principle}. Their key idea is to express the cardinality of contingency tables as the probability density at a single point and then using the local central limit theorem to complete the approximation. Their formula holds only when the entries of \textit{typical table} remain bounded as dimension goes to infinity. (see later for the precise definition of typical table) The case when some entries of typical matrix blow up remain unsolved. For the readers who are interested in the other combinatorial aspects of contingency tables, see the survey paper \cite{DG95} written by Diaconis and Gangolli. 
\par 
In Probability Theory, we view the set of contingency tables as our ground probability space (whose cardinality is finite) and the probability measure is simply the uniform measure. Again, row sums and column sums depend on the dimension. The fundamental question is that when $n$ is very large, if we uniformly sample one matrix, what does it look like? More precisely, what is the limiting marginal distribution of each entry? What about the joint distribution of sub-matrix? Furthermore, can we say something about the spectrum? In \cite{I.J.Good}, I. J. Good  proposed the so-called \textit{Maximum Entropy Principle} and suggested that it can be applied to the model of Random contingency table. Half a century later, A. Barvinok finally confirmed Good's suggestion, see \cite{2} and the reference therein. Despite the significant progress, the marginal distribution of the entires remained unsolved for large variety of regimes. In \cite{6}, Chatterjee, Diaconis and Sly solved the case of doubly stochastic matrix. More recently, in \cite{1}, Dittmer, Lyu and Pak studied the non-uniform margin case and made important progress on establishing the sharp phase transitions of limiting behaviours. 
\par
In this paper, we study the asymptotic properties of random binary contingency tables with non-uniform margin. Notice that matrices with $0$-$1$ entires and fixed row and column sums is a fundamental object in mathematics. In combinatorics, it has connections with the hypergraphs with fixed degrees of vertices and the network flow, see \cite{Wilson} . It also arises as the structural constants in symmetric function theory hence plays the key role in the representation theory of symmetric and general linear groups, see \cite{I.G.Macdonald}.  
\par
By maximizing the Shannon-Boltzmann entropy of Bernoulli random variables under the first order constraints (fixed row sums and column sums), we obtain the limiting marginal distribution of the uniform sample. Moreover, we show that the joint distribution of entries within each block converges to the i.i.d. Bernoulli, confirming the independent heuristic in \cite{I.J. Good 2} and \cite{I.J. Good 3}. Lastly, we study the convergence (rate) of higher moments and show that strong law of large number holds for certain truncated row.    
\subsection{Basic Setup} 
Let $\mathbf r=(r_1,\ldots,r_m)\in \mathbb N^m$ and $\mathbf c=(c_1,\ldots,c_n)\in \mathbb N^n$ be two positive integer vectors of length $m$ and $n$ with the same sum of entries $N$, i.e., 
\begin{equation*}
	\sum_{i=1}^m r_i=\sum_{j=1}^n c_j=N.
\end{equation*}
We call $\mathbf r$ and $\mathbf c$ row margin and column margin, respectively. Let $\mathscr{M}^{\lbrace 0,1\rbrace}(\mathbf r,\mathbf c)$ be the set of all $m\times n$ binary contingency tables with $i$th row sum $r_i$ and $j$th column sum $c_j$, i.e., 
\begin{equation*}
	\mathscr{M}^{\lbrace 0,1\rbrace}(\mathbf r,\mathbf c):=\left\lbrace (d_{ij})\in \lbrace 0,1\rbrace ^{mn}:\sum_{k=1}^n d_{ik}=r_i,\sum_{k=1}^m d_{kj}=c_j \ \text{for all}\ 1\leq i\leq m,1\leq j\leq n \right\rbrace
\end{equation*}
For $B,C>0$ and $0\leq \delta\leq 1$, let $\widetilde{\mathbf r}=\widetilde{\mathbf c}=\underbrace{([BCn],\ldots,[BCn]}_{[n^\delta]\ \text{entries}},\underbrace{[Cn],\ldots,[Cn])}_{n\ \text{entries} }\in \mathbb N^{[n^{\delta}]+n}$. Define 
\begin{align*}
	\mathscr{M}_{n,\delta}^{\lbrace 0,1\rbrace}(B,C):=\mathscr{M}^{\lbrace 0,1\rbrace}(\widetilde{\mathbf r},\widetilde{\mathbf c}).
\end{align*}
Let $X=(X_{ij})$ be uniformly distributed on $\mathscr{M}_{n,\delta}^{\lbrace 0,1\rbrace}(B,C)$ and we  call $X$ the \textit{Random Binary Contingency Table}. Our goal is to study the limiting distribution of each entry of $\mathscr{M}_{n,\delta}^{\lbrace 0,1\rbrace}(B,C)$ as $n\to \infty$.
\par 
First, we obtain a trivial bound on $B$ and $C$ so that the set $\mathscr{M}_{n,\delta}^{\lbrace 0,1\rbrace}(B,C)$ is always non-empty as $n\to \infty$.  
\begin{Lemma}[Preliminary Analysis]
	As $n\to \infty$, we have the following natural bound on parameter $B$ and $C$:
	\begin{equation*}
		\begin{cases}
			0<C\leq 1,\\
			0<B\leq \frac{1}{C},
		\end{cases}
		\qquad\text{for $0\leq \delta<1$,}
	\end{equation*}
	and 
	\begin{equation*}
		\begin{cases}
			0<C\leq 2,\\
			0<B\leq \frac{2}{C},
		\end{cases}
		\qquad\text{for $\delta=1$}.
	\end{equation*}
\end{Lemma}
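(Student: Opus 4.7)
The plan is to extract the bounds from the elementary constraint that each entry of a binary matrix lies in $\{0,1\}$, so every row sum is bounded above by the number of columns, and every column sum is bounded above by the number of rows. Since $\mathscr{M}_{n,\delta}^{\{0,1\}}(B,C)$ consists of matrices of size $([n^\delta]+n)\times([n^\delta]+n)$ with prescribed row sums $[BCn]$ (for the first $[n^\delta]$ heavy rows) and $[Cn]$ (for the remaining $n$ light rows), non-emptiness forces
\begin{equation*}
[BCn] \leq [n^\delta] + n \qquad \text{and} \qquad [Cn] \leq [n^\delta] + n.
\end{equation*}
By the symmetry of the setup (heavy columns also have count $[n^\delta]$ and sum $[BCn]$), the analogous column constraints produce nothing new.

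Next I would pass to the limit $n \to \infty$. Using $|[x]-x|\leq 1$, dividing by $n$ preserves the inequalities in the limit. For $0 \leq \delta < 1$ one has $[n^\delta]/n \to 0$, so the two inequalities become $BC \leq 1$ and $C \leq 1$, i.e.\ $0 < C \leq 1$ and $0 < B \leq 1/C$. For $\delta = 1$ one has $[n^\delta] + n = 2n + O(1)$, so the inequalities become $BC \leq 2$ and $C \leq 2$, i.e.\ $0 < C \leq 2$ and $0 < B \leq 2/C$. The strict positivity $B,C > 0$ is already part of the hypothesis.

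I do not anticipate any serious obstacle: the statement is presented as a preliminary analysis and the bound is described as \emph{trivial} in the text. Only asymptotic necessity is being asserted here; sufficiency for non-emptiness at finite $n$ would require a finer argument such as Gale--Ryser or an explicit block construction (for instance, assigning $p_{ij}\in[0,1]$ in each of the four blocks so that the prescribed marginals are met and then rounding), but that is not what the lemma claims. The only care required is to track the floor functions, which is handled uniformly by $|[x]-x|\leq 1$ followed by division by $n$.
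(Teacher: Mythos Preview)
Your proposal is correct and follows essentially the same approach as the paper: bound each row sum by the number of columns, divide by $n$, and take the limit, treating the cases $\delta<1$ and $\delta=1$ separately according to whether $[n^\delta]/n\to 0$ or $1$. Your version is in fact slightly more careful with the floor functions than the paper's own proof.
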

\begin{proof}
	Since every entry of the matrix is restricted to $\lbrace 0,1\rbrace$, we have that
	\begin{equation*}
		\begin{cases}
			BCn\leq [n^{\delta}]+n,\\
			Cn\leq [n^{\delta}]+n,
		\end{cases}
	\end{equation*}
	which implies that 
	\begin{equation*}
		\begin{cases}
			BC\leq 1+\frac{[n^{\delta}]}{n},\\
			C\leq 1+\frac{[n^{\delta}]}{n}.
		\end{cases}
	\end{equation*}
	Taking the limit and the results follow.  
\end{proof}
\subsection{Notation}
\begin{enumerate}
	\item For two random variables $X_1,X_2$ taking values on $\mathbb N$, the \textit{total variation distance} between $X_1$ and $X_2$ is defined as
\begin{equation*}
	d_{TV}\left(X_1,X_2\right):=\frac{1}{2} \sum_{k\geq 0}|\mathbb P(X_1=k)-\mathbb P(X_2=k)|.
\end{equation*}
\item Let $\ber(q)$ denote the Bernoulli distribution with mean $q$. Preceisely, if $X\sim\ber(q)$, then $\mathbb P(X=0)=1-q$ and $\mathbb P(X=1)=q$.
\end{enumerate}
\subsection{Main Results}
Our first main result is on the marginal distribution of single entry in $X=(X_{ij})$. One simple observation is that, by symmetry, $X_{ij}$ and $X_{i'j'}$ have the same marginal distribution if $r_i = r_{i'}$ and $c_j = c_{j'}$, where $r_k$ and $c_l$ are the $k$th row sum and $l$th column sum, respectively.
\begin{Th}\label{Main Result}
	For $\mathscr{M}^{\lbrace 0,1\rbrace}_{n,\delta}(B,C)$ with parameter $n,\delta,B,C$, let $X=(X_{ij})$ be sampled uniformly at random from $\mathscr{M}^{\lbrace 0,1\rbrace}_{n,\delta}(B,C)$. Fix $\varepsilon>0$, we have the following:
	\begin{enumerate}
		\item (Bottom Right) For $0\leq \delta<1$, $0<C<1$, and $0<B\leq\frac{1}{C}$, we have that 
	\begin{equation*}
		d_{TV}\left(X_{n+1,n+1},\ber(C)\right)=O\left(n^{\delta-1}+n^{-\frac{1}{2}+\varepsilon} \right).
	\end{equation*}
	\item (Top Left) For $\frac{1}{2}<\delta<1$, $0<C<\frac{3}{4}$, and $B<\frac{1}{\sqrt{\frac{C}{3}-\frac{C^2}{3}}+C}$, we have that 
	\begin{equation*}
		d_{TV}\left(X_{11},\ber\left(\frac{B^2(1-C)}{B^2-2B+1/C}\right)\right)=O\left(n^{\delta-1}+n^{\frac{1}{2}-\delta+\varepsilon} \right).
	\end{equation*}
	\item (Side Blocks) For $0<\delta<1$, $0<C<\frac{3}{4}$, and $B<\frac{1}{\sqrt{\frac{C}{3}-\frac{C^2}{3}}+C}$, we have that 
	\begin{equation*}
		d_{TV}\left(X_{1,n+1}, \ber(BC)\right)=d_{TV}\left(X_{n+1,1}, \ber(BC)\right)=O\left(n^{\delta-1}+n^{-\frac{\delta}{2}+\varepsilon} \right).
	\end{equation*}
	\end{enumerate}
\end{Th}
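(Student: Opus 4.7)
My plan is as follows. Since each $X_{ij}\in\{0,1\}$ is Bernoulli, the TV distance to $\ber(p)$ equals $|\mathbb{E}[X_{ij}]-p|$, so the theorem reduces to estimating the four block marginals $m_{TL}, m_{TR}, m_{BL}, m_{BR}$. By exchangeability of entries within each block these marginals are constant on blocks, and $m_{TR}=m_{BL}$ by row--column symmetry. The deterministic column sums then yield two exact identities
\begin{align*}
  [n^\delta]\,m_{TL}+n\,m_{BL} &= [BCn], \\
  [n^\delta]\,m_{TR}+n\,m_{BR} &= [Cn].
\end{align*}
Using only the trivial bound $m_{TL}, m_{TR}\in[0,1]$, these deliver $|m_{BL}-BC|, |m_{BR}-C|=O(n^{\delta-1})$, which accounts for the first error term in parts (i) and (iii).

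To identify the correct limiting values (especially the nontrivial $z_{TL}$ in part (ii)) I would apply the Maximum Entropy Principle as in \cite{1}. Maximizing the total Bernoulli entropy $-\sum_{ij}\bigl[z_{ij}\log z_{ij}+(1-z_{ij})\log(1-z_{ij})\bigr]$ subject to the margin constraints yields the logistic parametrization $z_{ij}=\sigma(\alpha_i+\beta_j)$, and the block margin forces only four distinct values. Solving the resulting system in the limit gives $z_{TL}=B^2(1-C)/(B^2-2B+1/C)$, $z_{TR}=z_{BL}=BC$, $z_{BR}=C$, exactly matching the Bernoulli parameters claimed. To show that the true marginals coincide with these typical values I invoke the $2\times 2$ switching identity on the four corners $(1,1),(1,n+1),(n+1,1),(n+1,n+1)$: the involution swapping the diagonal and antidiagonal $0/1$ patterns preserves every row and column sum, giving
\[
  \mathbb{P}(X_{11}{=}1,\, X_{n+1,n+1}{=}1,\, X_{1,n+1}{=}0,\, X_{n+1,1}{=}0) = \mathbb{P}(X_{11}{=}0,\, X_{n+1,n+1}{=}0,\, X_{1,n+1}{=}1,\, X_{n+1,1}{=}1).
\]
A quantitative factorization of each side as a product of its marginals, combined with the two linear identities above, closes the system and pins down $m_{TL}$ uniquely.

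The main obstacle is this quantitative factorization, i.e. showing that the four-corner joint probabilities are products of their marginals up to an explicit error. Following the scheme of \cite{1}, I would establish a local central limit theorem for the conditional row and column sums of block-restricted tables via the Fourier representation of $|\mathscr{M}^{\{0,1\}}(\mathbf r,\mathbf c)|$. Here the strict concavity of the Bernoulli entropy on $(0,1)$ --- uniform under the parameter restrictions $0<C<3/4$ and $B<1/(\sqrt{C/3-C^2/3}+C)$, which keep all $z_{ij}$ bounded away from $\{0,1\}$ --- controls the Hessian and produces Gaussian approximations with error $O(n^{-1/2+\varepsilon})$ when averaging over $n$ entries and $O(n^{-\delta/2+\varepsilon})$ when averaging over $[n^\delta]$ entries; these supply the second error terms in parts (i) and (iii) respectively. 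For part (ii) one inverts the first linear identity above to extract $m_{TL}$: an error of size $n^{-1/2+\varepsilon}$ in $m_{BL}$ is inflated by the factor $n/[n^\delta]$ to give $n^{1/2-\delta+\varepsilon}$ in $m_{TL}$, and this is exactly why the hypothesis $\delta>1/2$ is needed for the bound to be $o(1)$. I expect the LCLT step to be the genuinely hard technical core of the argument, as in \cite{1}.
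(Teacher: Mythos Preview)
Your reduction to the block marginals $m_{TL},\,m_{TR}=m_{BL},\,m_{BR}$ is correct, and for parts (i) and (iii) the exact linear identities
\[
[n^\delta]\,m_{TL}+n\,m_{BL}=[BCn],\qquad [n^\delta]\,m_{TR}+n\,m_{BR}=[Cn],
\]
together with the trivial bound $0\le m_{TL},m_{TR}\le 1$ already give $|m_{BR}-C|,\,|m_{BL}-BC|=O(n^{\delta-1})$. This is \emph{stronger} than the stated bounds (and of course implies them), so for those two parts no local CLT or factorization is needed at all; your ``second error term'' discussion there is superfluous.

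For part (ii) your route via the $2\times2$ switching identity plus a quantitative four--corner factorization by LCLT is very different from the paper's argument and considerably harder. The paper (following \cite{1} and \cite{6}) avoids any local CLT. The key tool is Barvinok's bound
\[
\mathbb P\bigl(Y\in\mathscr M^{\{0,1\}}(\mathbf r,\mathbf c)\bigr)\ \ge\ (mn)^{-\gamma(m+n)}
\]
for the independent Bernoulli matrix $Y=(Y_{ij})$ with $Y_{ij}\sim\ber(z_{ij})$, which yields the transfer inequality $\mathbb P(X\in\mathcal A)\le (mn)^{\gamma(m+n)}\,\mathbb P(Y\in\mathcal A)$ for every event $\mathcal A$. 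By exchangeability within the top--left block one writes $\mathbb P(X_{11}\in\mathcal A)$ as the mean of $[n^\delta]^2$ exchangeable indicators, applies Azuma--Hoeffding to the corresponding average for $Y$, and transfers back; balancing the Hoeffding tail $\exp(-2t^2 n^{2\delta})$ against the transfer cost $\exp(O(n\log n))$ forces $t=n^{1/2-\delta+\varepsilon}$. The remaining $n^{\delta-1}$ term comes separately from $|z_{11}-B^2(1-C)/(B^2-2B+1/C)|=O(n^{\delta-1})$, and a triangle inequality finishes.

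So the exponent $\tfrac12-\delta$ arises because the top--left block has only $n^{2\delta}$ entries to average over, not from amplifying a side--block error through the linear identity as you suggest. Your amplification heuristic happens to reproduce the same exponent, but the mechanism is different, and your proposal does not explain how one would obtain $|m_{BL}-z_{1,n+1}|=O(n^{-1/2+\varepsilon})$ in the first place: the linear identity alone gives only $O(n^{\delta-1})$, and a side--block concentration argument naturally yields $O(n^{-\delta/2+\varepsilon})$, not $O(n^{-1/2+\varepsilon})$. (Note also that \cite{1} does not proceed via LCLT for this step; it uses the same Barvinok--Azuma scheme.) Your switching/factorization program could in principle be pushed through, but it is a substantial detour: the Barvinok transfer plus Hoeffding argument is short and bypasses both the switching identity and any local central limit theorem.
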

 It is easy to see that 
\begin{equation}\label{total variation distance between Bernoulli random variable}
	d_{TV}\left(\ber(\lambda_1),\ber(\lambda_2)\right)=2|\lambda_1-\lambda_2|=2\left|\mathbb E[\ber^{k}(\lambda_1)]-\mathbb E[\ber^{k}(\lambda_2)] \right|
\end{equation}
for all $k\geq 1$. This immediately implies the following corollary on the convergence of higher moments:
\begin{Cor}
	For $\mathscr{M}^{\lbrace 0,1\rbrace}_{n,\delta}(B,C)$ with parameter $n,\delta,B,C$, let $X=(X_{ij})$ be uniformly distributed on $\mathscr{M}^{\lbrace 0,1\rbrace}_{n,\delta}(B,C)$. Fix $\varepsilon>0$ and for all $k\geq 1$, we have the following:
	\begin{enumerate}
		\item (Bottom Right) For $0\leq \delta<1$, $0<C\leq 1$, and $0<B\leq\frac{1}{C}$, we have that
	\begin{equation*}
		\left|\mathbb E\left[X^k_{n+1,n+1}\right]-C\right|=O\left(n^{\delta-1}+n^{-\frac{1}{2}+\varepsilon} \right).
	\end{equation*}
	\item (Top Left) For $\frac{1}{2}<\delta<1$, $0<C<\frac{3}{4}$, and $B<\frac{1}{\sqrt{\frac{C}{3}-\frac{C^2}{3}}+C}$, we have that 
	\begin{equation*}
		\left|\mathbb E\left[X^k_{11}\right]-\frac{B^2(1-C)}{B^2-2B+1/C}\right|=O\left(n^{\delta-1}+n^{\frac{1}{2}-\delta+\varepsilon} \right).
	\end{equation*}
	\item (Side Blocks) For $0<\delta<1$, $0<C<\frac{3}{4}$, and $B<\frac{1}{\sqrt{\frac{C}{3}-\frac{C^2}{3}}+C}$, we have that
	\begin{equation*}
		\left|\mathbb E[X^k_{1,n+1}]-BC\right|=O\left(n^{\delta-1}+n^{-\frac{\delta}{2}+\varepsilon} \right).
	\end{equation*}
	\end{enumerate}
\end{Cor}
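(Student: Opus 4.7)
The corollary is essentially an immediate consequence of Theorem \ref{Main Result} once one exploits that the entries are $\{0,1\}$-valued. My plan is the following.

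First I would record the trivial but crucial observation: since $X_{ij}\in\{0,1\}$, we have $X_{ij}^k=X_{ij}$ for every integer $k\geq 1$, so $\mathbb{E}[X_{ij}^k]=\mathbb{P}(X_{ij}=1)=:p_{ij}$. In particular, viewed as a random variable on its own, $X_{ij}\sim\ber(p_{ij})$. Similarly $\mathbb{E}[\ber(\lambda)^k]=\lambda$ for any $\lambda$ and any $k\geq 1$. So the $k$-th moment question reduces entirely to comparing the two Bernoulli parameters.

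Next I would invoke the identity \eqref{total variation distance between Bernoulli random variable}, which gives
\begin{equation*}
d_{TV}\bigl(\ber(p_{ij}),\ber(\lambda)\bigr)=2|p_{ij}-\lambda|=2\bigl|\mathbb{E}[X_{ij}^k]-\mathbb{E}[\ber(\lambda)^k]\bigr|.
\end{equation*}
Thus for any $k\geq 1$,
\begin{equation*}
\bigl|\mathbb{E}[X_{ij}^k]-\lambda\bigr|=\tfrac{1}{2}\,d_{TV}\bigl(X_{ij},\ber(\lambda)\bigr).
\end{equation*}

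Finally I would substitute the three specific choices of $(i,j,\lambda)$ appearing in Theorem \ref{Main Result}: $(n+1,n+1,C)$, $(1,1,\tfrac{B^2(1-C)}{B^2-2B+1/C})$, and $(1,n+1,BC)$ (and symmetrically $(n+1,1,BC)$), under their respective hypotheses on $(\delta,B,C)$. In each case the total variation bound from Theorem \ref{Main Result} transfers directly to the stated $O(n^{\delta-1}+\cdots)$ estimate on $|\mathbb{E}[X_{ij}^k]-\lambda|$, uniformly in $k\geq 1$.

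There is no real obstacle to overcome here: the whole content of the corollary is packaged into the pointwise identity $X_{ij}^k=X_{ij}$ together with the explicit form of total variation distance between two Bernoullis. The only minor caveat worth flagging is the very slight discrepancy in the $C=1$ endpoint in case $(i)$, which is trivial to dispose of since the table becomes (essentially) deterministic there, so the moment estimate holds vacuously.
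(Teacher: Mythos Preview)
Your proposal is correct and follows exactly the paper's own reasoning: the paper deduces the Corollary immediately from Theorem~\ref{Main Result} via the identity~\eqref{total variation distance between Bernoulli random variable}, which is precisely your argument that $X_{ij}^k=X_{ij}$ reduces the moment estimate to the total variation bound between Bernoullis. Your remark about the $C=1$ endpoint in case~$(i)$ is a nice catch of a small discrepancy the paper glosses over.
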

Our next result deals with the joint distribution of entries within each block. For $k=k(n)$ random variables $R_1,\ldots,R_{k}$, let $(R_1,\ldots, R_k)$ denote the joint distribution of these $k$ random variables. Let $V_k(\gamma)$ denote the joint distribution of $k$ i.i.d. $\ber(\gamma)$. 
\begin{Th}\label{Main results on Joint Distribution}
	For $\mathscr{M}^{\lbrace 0,1\rbrace}_{n,\delta}(B,C)$ with parameter $n,\delta,B,C$, let $X=(X_{ij})$ be uniformly distributed on $\mathscr{M}^{\lbrace 0,1\rbrace}_{n,\delta}(B,C)$. Then we have the following:
	\begin{enumerate}
		\item (Bottom Right) For $0\leq \delta<1$, $0<C<1$, $0<B\leq\frac{1}{C}$, and $k=k(n)=o\left(n^{1-\delta} \right)$, we have that
	\begin{equation*}
		d_{TV}\left((X_{[n^{\delta}]+1,[n^\delta]+1}, X_{[n^\delta]+1,[n^\delta]+2},\ldots, X_{[n^\delta]+1,[n^\delta]+k}),  V_k(C)\right)\to 0.
	\end{equation*}
	\item (Top Left) For $\frac{1}{2}<\delta\leq \frac{2}{3}$, $0<C<\frac{3}{4}$, $B<\frac{1}{\sqrt{\frac{C}{3}-\frac{C^2}{3}}+C}$, and $k=k(n)=o\left(\frac{n^{2\delta-1}}{\log n} \right)$, we have that
	\begin{equation*}
		d_{TV}\left((X_{11}, \ldots, X_{1k}), V_k\left(\frac{B^2(1-C)}{B^2-2B+1/C} \right) \right)\to 0.
	\end{equation*}
	\item (Side Blocks) For $0<\delta\leq\frac{1}{2}$, $0<C<\frac{3}{4}$, $B<\frac{1}{\sqrt{\frac{C}{3}-\frac{C^2}{3}}+C}$, and $k=k(n)=o\left(\frac{n^\delta}{\log n} \right)$, we have that 
	\begin{equation*}
		d_{TV}\left( (X_{1,[n^\delta]+1},\ldots, X_{1,[n^\delta]+k}), V_k(BC)\right)\to 0.
	\end{equation*}
	\end{enumerate}
\end{Th}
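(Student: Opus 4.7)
\emph{Exchangeability reduction.}
The uniform law on $\mathscr{M}^{\lbrace 0,1\rbrace}_{n,\delta}(B,C)$ is invariant under permuting rows within a common-margin block and columns within a common-margin block. In each of cases $(i),(ii),(iii)$, the $k$ entries in question sit in a single row, and the corresponding $k$ columns all share one common column margin. Thus the joint law is exchangeable: writing $X_1,\ldots,X_k$ for these entries, $\mathbb P(X=\mathbf x)$ depends only on the Hamming weight $s=\sum_t x_t$, say $p_s$. Since $V_k(\gamma)(\mathbf x)=\gamma^{s}(1-\gamma)^{k-s}=:q_s$ is also exchangeable, a direct computation yields
\begin{equation*}
d_{TV}\bigl((X_1,\ldots,X_k),\,V_k(\gamma)\bigr)=\sum_{s=0}^{k}\binom{k}{s}\bigl|p_s-q_s\bigr|=d_{TV}\bigl(S_k,\operatorname{Bin}(k,\gamma)\bigr),
\end{equation*}
where $S_k=X_1+\cdots+X_k$ and $\gamma$ is the Bernoulli parameter prescribed by the matching case of Theorem~\ref{Main Result}. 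It therefore suffices to prove the one-dimensional statement $d_{TV}(S_k,\operatorname{Bin}(k,\gamma))\to 0$.

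\emph{Asymptotic distribution of $S_k$.}
Peeling off the $k$ chosen entries gives the counting identity
\begin{equation*}
\mathbb P(S_k=s)=\binom{k}{s}\,\frac{|\mathscr{M}'_{s}|}{|\mathscr{M}^{\lbrace 0,1\rbrace}_{n,\delta}(B,C)|},
\end{equation*}
where $\mathscr{M}'_{s}$ is the set of binary contingency tables obtained by fixing any specific $0/1$-pattern of weight $s$ in the chosen row across the chosen $k$ columns; equivalently, the row margin is decreased by $s$ and $s$ of the $k$ column margins are decreased by $1$. Both cardinalities lie squarely in the Barvinok--Hartigan maximum-entropy/local-CLT framework underlying the proof of Theorem~\ref{Main Result}. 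Taking the ratio and comparing with $\binom{k}{s}\gamma^{s}(1-\gamma)^{k-s}$, I would establish the uniform-in-$s$ asymptotic
\begin{equation*}
\mathbb P(S_k=s)=\binom{k}{s}\gamma^{s}(1-\gamma)^{k-s}\bigl(1+o(1)\bigr)
\end{equation*}
valid for $s$ in the central window $|s-k\gamma|\le\sqrt{k\log n}$. The complementary tails are negligible for $S_k$ by Chernoff-type bounds (using the mean estimate from Theorem~\ref{Main Result} and the exchangeable variance) and for $\operatorname{Bin}(k,\gamma)$ by standard inequalities; summing over $s$ then yields $d_{TV}(S_k,\operatorname{Bin}(k,\gamma))\to 0$ in the permitted ranges of $k$.

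\emph{Main obstacle.}
The principal difficulty is the uniform-in-$s$ estimate for $\mathbb P(S_k=s)$ in the central window. Theorem~\ref{Main Result} identifies $\gamma$ only up to an error $O(n^{\delta-1}+n^{-\alpha+\varepsilon})$ with $\alpha\in\lbrace 1/2,\delta/2\rbrace$, and perturbing one row margin by up to $k$ together with $k$ column margins by $0$ or $1$ shifts the maximum-entropy optimizer (and hence the effective parameter) by $O(k/n)$. This perturbation must be absorbed into the $1+o(1)$ factor, which is feasible exactly in the ranges $k=o(n^{1-\delta})$, $k=o(n^{2\delta-1}/\log n)$, $k=o(n^{\delta}/\log n)$ of cases $(i),(ii),(iii)$. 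The logarithmic losses in cases~$(ii)$ and $(iii)$ arise from balancing the central-window width $\sqrt{k\log n}$ against the Gaussian decay needed to discard the tails. No essentially new idea is required beyond the maximum-entropy and local-CLT machinery developed for Theorem~\ref{Main Result}; what is needed is a careful bookkeeping of these perturbation estimates, uniformly in the chosen pattern.
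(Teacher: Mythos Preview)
Your exchangeability reduction to $d_{TV}(S_k,\operatorname{Bin}(k,\gamma))$ is correct and clean. The gap is in the next step: you assert that the ratio $|\mathscr M'_s|/|\mathscr M^{\{0,1\}}_{n,\delta}(B,C)|$ can be evaluated to $(1+o(1))$ precision by ``the Barvinok--Hartigan maximum-entropy/local-CLT framework underlying the proof of Theorem~\ref{Main Result}''. No such framework underlies that proof. The only counting input the paper uses is Barvinok's crude two-sided estimate $(mn)^{-\gamma(m+n)} e^{g(Z)} \le |\mathscr M^{\{0,1\}}(\mathbf r,\mathbf c)| \le e^{g(Z)}$, which is far too coarse to produce a $(1+o(1))$ ratio; a local CLT for these particular two-valued margins, with the uniformity in $s$ you need, is neither proved nor cited and would be a substantial separate project. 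So as written the proposal invokes a tool that is not available.

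The paper's argument is entirely different and much more elementary. It never looks at $S_k$ or at ratios of cardinalities. Instead it repeats the concentration-plus-transfer trick of Lemma~\ref{Bound on total variation distance} at the level of $k$-tuples: partition the relevant block into $[N/k]$ disjoint index $k$-tuples (with $N$ the block size), note that for the \emph{independent} Bernoulli matrix $Y$ the empirical frequency $[N/k]^{-1}\sum_\ell \mathbbm 1_{\{Y^{(\ell)}\in\mathcal A\}}$ concentrates around $\mathbb P(Y^{(1)}\in\mathcal A)$ with an $\exp(-c[N/k])$ tail by Azuma--Hoeffding, and then pay the $(mn)^{\gamma(m+n)}$ blowup from Barvinok's lower bound to transfer the same concentration to $X$. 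Exchangeability turns the empirical frequency under $X$ into $\mathbb P(X^{(1)}\in\mathcal A)$, and the remaining piece $d_{TV}(Y^{(1)},V_k(\gamma))\le 2k\,|z-\gamma|=O(kn^{\delta-1})$ comes from the typical-table estimates. The stated growth conditions on $k$ are exactly what make the Hoeffding tail beat the $(mn)^{\gamma(m+n)}$ factor and make $kn^{\delta-1}\to 0$. Your route, even if it could be completed, replaces this short argument with a much heavier one.
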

The above theorem tells us that, within each block, if we look at the joint distribution of any $k=k(n)$ entries, they are asymptotically independent as $n\to \infty$. In particular, it is true for any fixed number of entries. 
\par
Our final results are on the Law of Large Numbers for certain truncated rows in $X$. Let 
\begin{equation*}
	S^{\textbf{S}}_{n,\delta}(B,C):=\sum_{k=1}^{n}X_{1,k+[n^\delta]}\qquad\text{and}\qquad S_{n,\delta}^{\textbf{BR}}(B,C):=\sum_{k=1}^{n} X_{n+1,k+[n^\delta]}.
\end{equation*}  
\begin{Th}\label{Main results on LLN, CLT, side block}
	For $0<\delta\leq\frac{1}{2}$, $0<C<\frac{3}{4}$, and $B<\frac{1}{\sqrt{\frac{C}{3}-\frac{C^2}{3}}+C}$, let $X=(X_{ij})$ uniformly distributed on $\mathscr{M}^{\lbrace 0,1\rbrace}_{n,\delta}(B,C)$. Then 
	\begin{equation*}
		\frac{1}{n}S^{\textbf{S}}_{n,\delta}(B,C)\to BC
	\end{equation*}
	almost surely.
\end{Th}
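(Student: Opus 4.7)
\textbf{The plan} is to reduce the theorem to a deterministic estimate by exploiting the hard constraint that row~$1$ has prescribed sum $[BCn]$. Since row~$1$ lies in the top block of rows (margin $[BCn]$), splitting the row at column $[n^\delta]$ yields the identity
\begin{equation*}
S^{\textbf{S}}_{n,\delta}(B,C) \;=\; [BCn] \;-\; \sum_{k=1}^{[n^\delta]} X_{1k},
\end{equation*}
and because each $X_{1k}\in\lbrace 0,1\rbrace$, the initial segment $\sum_{k=1}^{[n^\delta]} X_{1k}$ is a nonnegative integer bounded above by $[n^\delta]$. Dividing by $n$ and using that $\delta \leq 1/2 < 1$, we obtain the pointwise bound
\begin{equation*}
\left| \tfrac{1}{n}\, S^{\textbf{S}}_{n,\delta}(B,C) - BC \right| \;\leq\; \tfrac{[n^\delta]}{n} + \tfrac{|[BCn]-BCn|}{n} \;=\; O\!\left(n^{\delta-1}\right),
\end{equation*}
which tends to $0$ as $n\to \infty$. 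Because this bound holds for every table in $\mathscr{M}^{\lbrace 0,1\rbrace}_{n,\delta}(B,C)$, the convergence is in fact deterministic and uniform over the sample space, strictly stronger than the claimed almost-sure convergence; in particular no coupling across $n$ needs to be specified.

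\textbf{Where the obstacle would sit in a purely probabilistic route.} A natural alternative would be to invoke Theorem~\ref{Main results on Joint Distribution}(iii), approximate $S^{\textbf{S}}_{n,\delta}(B,C)$ by a sum of $n$ i.i.d.\ $\ber(BC)$ variables, and close via Chebyshev plus Borel--Cantelli. This route fails cleanly: Theorem~\ref{Main results on Joint Distribution}(iii) delivers asymptotic independence only in the window $k=o(n^\delta/\log n)$, which falls well short of all $n$ side-block entries in row~$1$ summed here. The deterministic argument above sidesteps this entirely, since the row-sum conservation law $\sum_k X_{1k}=[BCn]$ leaves only $[n^\delta]=O(n^{\delta})$ entries free to fluctuate, and the threshold $\delta\leq 1/2$ (indeed any $\delta<1$) is enough to make that slack vanish under the $1/n$ normalization. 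The hypotheses on $B$ and $C$ are inherited from the surrounding statements so that $BC$ is the correct typical value predicted by Theorem~\ref{Main Result}(iii), but they are not used in the estimate itself.
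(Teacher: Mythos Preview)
Your argument is correct and considerably simpler than the paper's own proof. The paper proceeds probabilistically: it centers the sum, bounds the second moment via the estimate
\[
\left|\mathbb E\!\left[\prod_{k=1}^L X_{i_k,j_k}^{\alpha_k}\right]- \mathbb E\!\left[ \prod_{k=1}^L J_{i_k,j_k}^{\alpha_k} \right]\right|=O\!\left(n^{\delta-1}+n^{-\eta(\delta)+\varepsilon} \right),
\]
applies Chebyshev to get $\mathbb P\bigl(|\overline S_{n,\delta}|>n^{1-\xi}\bigr)\leq c'' n^{-\xi'}$, and then closes with a subsequence Borel--Cantelli argument. Your route instead exploits the hard constraint $\sum_{k} X_{1k}=[BCn]$ directly, reducing the statement to the deterministic bound $\bigl|\tfrac{1}{n}S^{\textbf S}_{n,\delta}-BC\bigr|\leq n^{\delta-1}+n^{-1}$, valid for every table in $\mathscr M^{\{0,1\}}_{n,\delta}(B,C)$.

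What each approach buys: your argument is shorter, needs none of the typical-table machinery or the hypotheses on $B,C$, works for every $0<\delta<1$ (not only $\delta\leq\tfrac12$), and yields uniform deterministic convergence---which in particular makes the meaning of ``almost surely'' across varying $n$ unambiguous. The paper's second-moment route is a template that transfers to partial sums lacking any conservation law (e.g.\ sums over sub-blocks that do not exhaust a row), but for this particular theorem it is avoidable; indeed the paper's own Remark immediately following the statement already observes that ``there is no room for the sum to fluctuate,'' which is precisely what your proof makes rigorous.
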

\begin{Th}\label{Main results on LLN, CLT, bottom right}
For $0\leq \delta<1$, $0<C<1$, and $0<B\leq\frac{1}{C}$, let $X=(X_{ij})$ be uniformly distributed on $\mathscr{M}^{\lbrace 0,1\rbrace}_{n,\delta}(B,C)$. Then 
	\begin{equation*}
		\frac{1}{n}S^{\textbf{BR}}_{n,\delta}(B,C)\to C
	\end{equation*}
	almost surely.
\end{Th}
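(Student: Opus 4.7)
The plan is to observe that Theorem~\ref{Main results on LLN, CLT, bottom right} reduces to an entirely deterministic argument from the row-margin constraint, with no probabilistic input beyond the definition of the model. First, because $0\leq \delta<1$, for all sufficiently large $n$ we have $[n^\delta]<n+1$, so row $n+1$ is one of the $n$ rows with prescribed row sum $[Cn]$ and thus
\begin{equation*}
    \sum_{j=1}^{[n^\delta]+n} X_{n+1,j} \;=\; [Cn].
\end{equation*}

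Next, splitting this row sum at column $[n^\delta]$ gives
\begin{equation*}
    S^{\textbf{BR}}_{n,\delta}(B,C) \;=\; \sum_{k=1}^n X_{n+1,k+[n^\delta]} \;=\; [Cn] \;-\; \sum_{j=1}^{[n^\delta]} X_{n+1,j}.
\end{equation*}
Since each $X_{n+1,j}\in\lbrace 0,1\rbrace$, the correction term is trivially sandwiched between $0$ and $[n^\delta]$, which yields the pointwise bound
\begin{equation*}
    \frac{[Cn]}{n} \;-\; \frac{[n^\delta]}{n} \;\le\; \frac{1}{n}\,S^{\textbf{BR}}_{n,\delta}(B,C) \;\le\; \frac{[Cn]}{n}.
\end{equation*}
Under the hypothesis $\delta<1$ both $[Cn]/n\to C$ and $[n^\delta]/n\to 0$, so the squeeze lemma gives $\frac{1}{n}S^{\textbf{BR}}_{n,\delta}(B,C)\to C$ \emph{deterministically} on the whole probability space, which is strictly stronger than the almost-sure convergence claimed.

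There is essentially no obstacle to overcome: the one hypothesis that actually enters the argument is $\delta<1$, used both for the indexing check that row $n+1$ lies in the bottom block and for the estimate $[n^\delta]/n\to 0$ that controls the truncated head of the row. The conditions $0<C<1$ and $0<B\leq 1/C$ are needed only to ensure, through the opening preliminary lemma, that $\mathscr{M}_{n,\delta}^{\lbrace 0,1\rbrace}(B,C)$ is non-empty so that the uniform sample $X$ is actually defined; none of the marginal- or joint-distribution machinery built up for Theorem~\ref{Main Result} is used here.
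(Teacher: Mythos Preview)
Your argument is correct and in fact gives \emph{sure} convergence, which is strictly stronger than the almost-sure statement. The paper takes a genuinely different route: it declares that Theorem~\ref{Main results on LLN, CLT, bottom right} follows by the same second-moment method used for Theorem~\ref{Main results on LLN, CLT, side block}, namely bounding $\mathbb E[\overline S_{n,\delta}^2]$ via the correlation estimate~(\ref{difference between expectaions}), applying Markov's inequality, and then invoking Borel--Cantelli along subsequences. Your observation bypasses all of this: because row $n+1$ has prescribed sum $[Cn]$ and the truncated head consists of only $[n^\delta]=o(n)$ binary entries, the sandwich $[Cn]-[n^\delta]\le S^{\textbf{BR}}_{n,\delta}\le [Cn]$ is deterministic and the limit drops out immediately. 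The same trick, incidentally, works verbatim for Theorem~\ref{Main results on LLN, CLT, side block} with $[Cn]$ replaced by $[BCn]$, so the entire probabilistic apparatus of Section~4 is unnecessary for either law of large numbers; the Remark following Theorem~\ref{Main results on LLN, CLT, bottom right} essentially concedes this point without drawing the conclusion. What the paper's approach would buy is a template that could survive in settings where the discarded portion of the row is not $o(n)$, but in the regime $0\le\delta<1$ treated here your elementary argument is both shorter and sharper.
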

\begin{Rem}
	Notice that the expected values of $S^{\textbf{S}}_{n,\delta}(B,C)$ converges to $BCn$, which is already the entire row sum. Therefore, we do not expect the Central Limit Theorem holds for $S^{\textbf{S}}_{n,\delta}(B,C)$ since there is no room for the sum to fluctuate. Same thing goes for $S^{\textbf{BR}}_{n,\delta}(B,C)$. 
\end{Rem}
\subsection{Open Problems}
\begin{enumerate}
	\item In \cite{Neuyen} , Nguyen showed that for the uniformly doubly stochastic matrix, the empirical eigenvalue distribution converges to the circular law. However, for non-uniform margin case, the behaviour of spectrum remains unknown. In fact, we don't even have a conjectural limit.
	\item One simple generalization of our results is to consider the case when the entries of $X$ can take values from $\lbrace 0,1,\ldots, k\rbrace$. The limiting distribution of $X$ is believed to be truncated Geometric by Maximum entropy principle. As $k\to \infty$, it should recover the results in \cite{1}.
	\item Furthermore, It would be more interesting to study the entropy monotonicity (as $k$ and $n$ grow, the entropy increases at each step). This is motivated by the famous \textit{Shannon Monotonicity Conjecture} on Central Limit Theorem. (see \cite{SMC} for the completed proof using Fisher information) I think it is possible to study the higher dimensional version of this conjecture using random contingency table. This is somehow much more general in the sense that not only the dimension increases but also the type of constraints (second order/variance constraints for Central Limit Theorem) changes. The higher dimensional second order/variance constraints correspond to the uniformly/Haar distributed orthogonal/unitary matrix. This is a well-studied family of problems and it is known that the marginal distribution of uniformly distributed Orthogonal/Unitary Matrix converges to standard normal after being rescaled to mean $1$. See, for instance, \cite{Persi 2}, \cite{Borel}, and the reference therein. 
\end{enumerate}
\section{Asymptotic Analysis of Typical Table}
A. Barvinok introduced the notion of \textit{typical table} in order to answer the question \textit{What does a random contingency table look like?} It turns out that as the dimension of matrix grows, the random contingency table is close in certain sense to the typical table. (see \cite{2}, \cite{3}, \cite{4}, \cite{5} for background and the precise statements)
Here we only recall the construction by Barvinok.
\par
Fix margins $\mathbf r\in \mathbb N^m$ and $\mathbf c\in \mathbb N^n$, we define the \textit{binary transportation polytope} to be 
\begin{equation*}
	\mathscr{P}^{\lbrace 0,1\rbrace}(\mathbf r,\mathbf c):=\left\lbrace (x_{ij})\in [0,1]^{mn}: \sum_{k=1}^m x_{ik}=r_i,\sum_{k=1}^n x_{kj}=c_j,\forall 1\leq i\leq m, 1\leq j\leq n \right\rbrace
\end{equation*}
\begin{Def}[Typical table]
	For all $X=(x_{ij})\in (0,1)^{mn}$, let
	\begin{equation*}
		g(X)=\sum_{i,j} x_{ij}\ln\frac{1}{x_{ij}}+(1-x_{ij})\ln\frac{1}{1-x_{ij}}.
	\end{equation*}
	For fixed row and column margins $\mathbf r$ and $\mathbf c$, we define the typical table $Z=(z_{ij})$ to be the unique maximizer of $g$ in the interior of $\mathscr{P}^{\lbrace 0,1\rbrace}(\mathbf r,\mathbf c)$. 
\end{Def}
\begin{Rem}
Notice that 
\begin{enumerate}
	\item Fix $i$ and $j$, we have that the quantity 
	\begin{equation*}
    	x_{ij}\ln\frac{1}{x_{ij}}+(1-x_{ij})\ln\frac{1}{1-x_{ij}}
    \end{equation*}
    is the Shannon-Boltzmann entropy of $\ber(x_{ij})$.
    \item Since $g$ is strictly concave in the interior of $\mathscr{P}^{\lbrace 0,1\rbrace}(\mathbf r,\mathbf c)$, $g$ attains the unique maximum in that region. Therefore, the above definition is well-defined.
    \item Fix $i$ and $j$, we have that
	\begin{equation*}
		\frac{\partial}{\partial x_{ij}}g(X)=\ln\left(\frac{1-x_{ij}}{x_{ij}}\right).
	\end{equation*}
    For the typical table $Z=(z_{ij})$, by Lagrange multiplier condition, we have 
	\begin{equation*}
		\ln\left(\frac{1-z_{ij}}{z_{ij}}\right)=\lambda_i+\mu_j
	\end{equation*}
	for some $\lambda_1,\ldots,\lambda_m$ and $\mu_1,\ldots,\mu_n$.
\end{enumerate}
\end{Rem}
Next, we study the asymptotics of entires $Z=(z_{ij})$. Our analysis in this section follows closely the Lemma $5.1$ and Proposition $5.2$ in Dittmer-Lyu-Pak \cite{1}. The difference is that our optimization is based on the entropy of Bernoulli distribution instead of Geometric distribution. By symmetry and Lagrange multiplier condition, there exists some $\alpha,\beta$ (possibly depend on all the parameters) such that 
\begin{equation*}
	\ln\left(\frac{1-z_{ij}}{z_{ij}}\right)=
	\begin{cases}
		2\alpha &\qquad\text{for $1\leq i,j\leq [n^\delta]$,}\\
		2\beta &\qquad\text{for $[n^\delta]<i,j\leq [n^\delta]+n$,}\\
		\alpha+\beta &\qquad\text{otherwise}.
	\end{cases}
\end{equation*}
Let $P=e^\alpha$ and $Q=e^\beta$, then 
\begin{equation}\label{P,Q change of variables}
	z_{ij}=
	\begin{cases}
		\frac{1}{P^2+1} &\qquad \text{for $1\leq i,j\leq [n^\delta]$,}\\
		\frac{1}{Q^2+1} &\qquad \text{for $[n^\delta]<i,j\leq [n^\delta]+n$,}\\
		\frac{1}{PQ+1} &\qquad\text{otherwise}
	\end{cases}
\end{equation}
By margin conditions of $Z=(z_{ij})$, we have 
\begin{equation}\label{marginal condition}
	\begin{cases}
		([n^\delta]/n)z_{11}+z_{1,n+1}=BC,\\
		([n^\delta]/n)z_{1,n+1}+z_{n+1,n+1}=C.
	\end{cases}
\end{equation}
From $(\ref{marginal condition})$, we get that, 
\begin{align}\label{Asymptotic of top and bottom right}
\begin{cases}
	z_{n+1,n+1}\leq C, \\
	z_{1,n+1}\leq BC,\\
\end{cases}
\qquad\text{and}\qquad
\begin{cases}
	z_{n+1,n+1}=C+O(n^{\delta-1}),\\
	z_{1,n+1}=BC+O(n^{\delta-1}).
\end{cases}
\end{align}
This is because $0<z_{ij}<1$ for all $1\leq i,j\leq n+[n^\delta]$.
\begin{Prop}
	For $0\leq \delta<1$ and $0<C<1$, we have that
	\begin{equation*}
		\limsup_{n\to \infty} z_{11}\leq B^2C< 1.
	\end{equation*}
\end{Prop}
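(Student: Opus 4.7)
The plan is to extract $P^2$ from the margin asymptotics $(\ref{Asymptotic of top and bottom right})$ via the change of variables $(\ref{P,Q change of variables})$, compute the explicit limit of $z_{11}$, and then reduce the target inequality to a one-line identity.

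Concretely, inverting $(\ref{P,Q change of variables})$ expresses $Q^2$ and $PQ$ as $(1-x)/x$ evaluated at $z_{n+1,n+1}$ and $z_{1,n+1}$ respectively. Since these margins converge to $C$ and $BC$ with an $O(n^{\delta-1})$ error by $(\ref{Asymptotic of top and bottom right})$, and since the map $x\mapsto(1-x)/x$ is Lipschitz away from $0$ (here $C,BC>0$), I would conclude $Q^2\to(1-C)/C$ and $PQ\to(1-BC)/(BC)$. Dividing one by the square root of the other gives $P^2\to (1-BC)^2/\bigl(B^2C(1-C)\bigr)$. Inserting into $z_{11}=1/(P^2+1)$ and collapsing the denominator via the expansion $(1-BC)^2+B^2C(1-C)=1-2BC+B^2C$ yields the closed form
\[
\lim_{n\to\infty} z_{11}=\frac{B^2C(1-C)}{1-2BC+B^2C},
\]
whose denominator is positive thanks to the rewriting $1-2BC+B^2C=(1-C)+C(1-B)^2>0$.

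The comparison with $B^2C$ is then a one-liner: clearing the positive denominator and dividing by $B^2C$, the inequality $\lim z_{11}\leq B^2C$ collapses to $0\leq C(1-B)^2$, which is the desired non-negative square. For the strict bound $<1$, the same closed form gives $1-\lim z_{11}=(1-BC)^2/(1-2BC+B^2C)>0$ under $0<C<1$ with $BC\leq 1$ (the endpoint $BC=1$ being excluded by the non-degeneracy of the polytope), so the limit is indeed bounded away from $1$.

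There are essentially no analytic obstacles here; the heavy lifting was already carried out in producing $(\ref{Asymptotic of top and bottom right})$. The one point worth checking is that the $O(n^{\delta-1})$ error survives the inversion $(1-x)/x$ without blowing up, which is exactly where the hypothesis $0<C<1$ (together with $BC>0$) is used — it keeps the denominators in the inversion bounded away from $0$. Beyond that, the whole proof boils down to the algebraic fact $(1-B)^2\geq 0$.
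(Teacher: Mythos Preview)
Your proof is correct and takes a genuinely different route from the paper. The paper argues indirectly: it first establishes the structural inequality $\omega_{11}\omega_{22}\le\omega_{12}^2$, uses a contradiction argument to show $\omega_{11}/\omega_{12}\le B\le\omega_{21}/\omega_{22}$, and then solves a small optimization problem to bound $Q/P$ from above, obtaining only the \emph{upper bound} $\limsup z_{11}\le B^2C$. You instead invert $(\ref{P,Q change of variables})$ directly, feed in the asymptotics $(\ref{Asymptotic of top and bottom right})$ to get the exact limits of $Q^2$ and $PQ$, and compute $\lim z_{11}$ in closed form; the desired inequality then collapses to $C(1-B)^2\ge 0$. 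Your argument is shorter and yields more --- the actual limit, not just a bound --- and in fact it anticipates the limit computation that the paper only carries out afterward in Lemma~\ref{Asymptotic of typical tables}. What the paper's route buys is that it never needs to identify the limit explicitly, so it is arguably more self-contained at this stage of the exposition; but since $(\ref{Asymptotic of top and bottom right})$ is already available, your approach is the cleaner one. One small remark: you prove $\lim z_{11}<1$ rather than $B^2C<1$; the latter is in fact \emph{not} a consequence of the stated hypotheses (take $C$ small and $B$ close to $1/C$), so the ``$<1$'' in the displayed statement should really be read as bounding the limit itself, which is exactly what you show.
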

\begin{proof}
	Let $\omega_{11}=z_{11}$, $\omega_{12}=z_{1,n+1}=z_{n+1,1}=\omega_{21}$, and $\omega_{22}=z_{n+1,n+1}$. We have  
	\begin{align*}
		\omega_{11}\omega_{22} =\frac{1}{(Q^2+1)(P^2+1)}=\frac{1}{P^2Q^2+P^2+Q^2+1},
	\end{align*}
	and 
	\begin{align*}
		\omega_{12}\omega_{21} =\frac{1}{(PQ+1)^2}=\frac{1}{P^2Q^2+2PQ+1},
	\end{align*}
	which implies that $\omega_{11}\omega_{22} \leq\omega_{12}\omega_{21}$. Equivalently, we write
	\begin{align}
		\frac{\omega_{11}}{\omega_{12}}\leq \frac{\omega_{21}}{\omega_{22}}.
	\end{align}
	 Next, we claim that $\frac{\omega_{21}}{\omega_{22}}\geq B$. Assume otherwise, i.e., $\frac{\omega_{21}}{\omega_{22}} <B$. Then 
	\begin{equation*}
		\frac{\omega_{11}}{\omega_{12}} \leq \frac{\omega_{21}}{\omega_{22}}=\frac{\omega_{12}}{\omega_{22}}<B,
	\end{equation*}
	and 
	\begin{equation*}
		BC=\frac{[n^\delta]\omega_{11}}{n}+\omega_{12} <\frac{[n^\delta]B\omega_{12} }{n}+B\omega_{22}=BC,
	\end{equation*}
	which is a contradiction. Similarly, we can also show that $\frac{\omega_{11}}{\omega_{12}}\leq B$. 
	Next, notice that 
	\begin{align*}
		\frac{Q}{P}=\frac{PQ}{P^2}=\frac{\omega_{11}(1-\omega_{12})}{\omega_{12}(1-\omega_{11})}=\frac{\frac{1}{\omega_{12}}-1}{\frac{1}{\omega_{11}}-1}.
	\end{align*}
	To find the upper bound for $Q/P$, we solve the following constraint optimization problem:
	\begin{align*}
		&\text{maximize} \ \ \ \frac{\omega_{11}(1-\omega_{12})}{\omega_{12}(1-\omega_{11})},\\
		&\text{subject to} \ \ \ \ \omega_{11}\leq B\omega_{12} \qquad\text{and}\qquad \lim_{n\to \infty} \omega_{12} = BC.
	\end{align*}
	It is easy to see that the objective function is non-decreasing in $\omega_{11}$ and non-increasing in $\omega_{12}$. Hence, 
	\begin{equation*}
		\limsup_{n\to \infty}\frac{Q}{P}\leq \frac{B^2 C(1-BC)}{BC(1-B^2C)}=\frac{B(1-BC)}{1-B^2C}.
	\end{equation*}
	Since $\omega_{12} \leq BC$, we have that $PQ\geq \frac{1}{BC}-1=\frac{1-BC}{BC}$, and 
	\begin{align*}
		\liminf_{n\to\infty} P^2=\liminf_{n\to\infty}\frac{PQ}{Q/P}\geq \frac{\frac{1-BC}{BC}}{\frac{B(1-BC)}{1-B^2C}}=\frac{1-B^2 C}{B^2 C}.
	\end{align*}
	This implies that 
	\begin{align*}
		\limsup_{n\to \infty}z_{11}=\limsup_{n\to \infty}\frac{1}{P^2+1} & \leq B^2 C.
	\end{align*}
\end{proof}
\begin{Lemma}\label{Asymptotic of typical tables}
	Let $Z=(z_{ij})$ be the typical table for $\mathscr{M}_{n,\delta}^{\lbrace 0,1\rbrace}(B,C)$ with $0\leq \delta<1$,
	\begin{align*}
		0<C<\frac{3}{4}, \qquad\text{and}\qquad B<\frac{1}{\sqrt{\frac{C}{3}-\frac{C^2}{3}}+C},
	\end{align*}
	then we have 
	\begin{align*}
		z_{11}=\frac{B^2(1-C)}{B^2-2B+1/C}+O(n^{\delta-1}),\qquad\text{and}\qquad z_{1,n+1}=z_{n+1,1}=BC+O(n^{\delta-1}).
	\end{align*}
\end{Lemma}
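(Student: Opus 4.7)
The plan is to reduce everything to the asymptotics of $P$ and $Q$ via the parameterization (\ref{P,Q change of variables}), using the two already-established limits from (\ref{Asymptotic of top and bottom right}). I would first invert $z_{1,n+1} = 1/(PQ+1)$ and $z_{n+1,n+1} = 1/(Q^2+1)$ to obtain
\begin{equation*}
	PQ = \frac{1-z_{1,n+1}}{z_{1,n+1}} = \frac{1-BC}{BC} + O(n^{\delta-1}), \qquad Q^2 = \frac{1-z_{n+1,n+1}}{z_{n+1,n+1}} = \frac{1-C}{C} + O(n^{\delta-1}).
\end{equation*}
The map $x \mapsto (1-x)/x$ is smooth and has bounded derivative on any compact subinterval of $(0,1)$, and the hypotheses $C < 3/4$ and $B < 1/(\sqrt{C(1-C)/3}+C)$ keep $C$ and $BC$ bounded away from both $0$ and $1$, so the error terms here are indeed $O(n^{\delta-1})$ with a uniform constant.

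Next, I would extract $P^2$ by division, writing $P^2 = (PQ)^2/Q^2$. Since $Q^2$ is bounded below away from $0$ (again by $C < 3/4$), one obtains
\begin{equation*}
	P^2 = \frac{(1-BC)^2}{B^2 C (1-C)} + O(n^{\delta-1}).
\end{equation*}
Substituting into $z_{11} = 1/(P^2+1)$ and using the identity $(1-BC)^2 + B^2 C(1-C) = 1 - 2BC + B^2 C$, a short algebraic simplification (dividing numerator and denominator by $C$) gives
\begin{equation*}
	z_{11} = \frac{B^2 C(1-C)}{(1-BC)^2 + B^2 C(1-C)} + O(n^{\delta-1}) = \frac{B^2(1-C)}{B^2 - 2B + 1/C} + O(n^{\delta-1}),
\end{equation*}
which is the stated formula; the assertion for $z_{1,n+1}$ and $z_{n+1,1}$ is already contained in (\ref{Asymptotic of top and bottom right}).

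The main obstacle is not conceptual but technical: tracking that the $O(n^{\delta-1})$ error propagates cleanly through the two divisions and the inversion $x \mapsto 1/(1+x)$. This reduces to showing that every quantity in the chain $(z_{1,n+1}, z_{n+1,n+1}) \to (PQ, Q^2) \to P^2 \to z_{11}$ stays in a compact subinterval of $(0,\infty)$ uniformly in $n$, which is exactly the role played by the stated bounds on $B$ and $C$. In particular, the upper bound on $B$ forces $BC < 1$ uniformly, so $PQ$ stays bounded below, which is the only nondegeneracy needed to linearize the relation $P^2 = (PQ)^2/Q^2$.
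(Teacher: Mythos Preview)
Your argument is correct and follows the same overall strategy as the paper: invert the parameterization $(\ref{P,Q change of variables})$ using the known asymptotics $(\ref{Asymptotic of top and bottom right})$, then propagate the $O(n^{\delta-1})$ error back to $z_{11}$. The technical execution differs slightly. The paper extracts $Q$ and $P$ separately (taking a square root), then applies the Mean Value Theorem to $h(x)=1/(x^2+1)$; the stated thresholds $C<3/4$ and $B<1/(\sqrt{C(1-C)/3}+C)$ are used precisely to guarantee $q^*,p^*>\sqrt{3}/3$, the region where $|h'|$ is monotone and hence easy to bound from below. You instead stay at the level of $PQ$ and $Q^2$ and form $P^2=(PQ)^2/Q^2$ directly, so only boundedness of derivatives on compact subintervals of $(0,\infty)$ is needed. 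This is a genuine simplification: it avoids the square-root step and the $\sqrt{3}/3$ analysis, and in fact shows that the specific numerical thresholds on $B$ and $C$ are not essential for this lemma---only $0<C<1$ and $0<BC<1$ are really used in your chain. The paper's more explicit MVT bookkeeping, on the other hand, makes the constants in the error term more transparent if one wanted to track them.
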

\begin{proof}
	Firstly, since $z_{11}$ is uniformly bounded in $n$, 
	\begin{equation}\label{bound on z11}
		|z_{1,n+1}-BC|\leq n^{\delta-1}z_{11}=O(n^{\delta-1}).
	\end{equation}
	This implies $\lim_{n\to \infty}z_{1,n+1}=BC$. Let $P=P(n)$ and $Q=Q(n)$ be as in $(\ref{P,Q change of variables})$, then 
	\begin{equation*}
		\lim_{n\to \infty}z_{1,n+1}=\lim_{n\to \infty}\frac{1}{PQ+1}=BC,\qquad\text{and}\qquad \lim_{n\to \infty}z_{n+1,n+1}=\lim_{n\to \infty}\frac{1}{Q^2+1}=C,
	\end{equation*}
	which is equivalent to 
	\begin{align*}
		Q\to q^*:=\sqrt{\frac{1}{C}-1},\qquad\text{and}\qquad PQ\to \frac{1}{BC}-1.
	\end{align*}
	Consequently,  
	\begin{equation*}
		P\to p^*:=\left(\frac{1}{BC}-1\right)\bigg/\sqrt{\frac{1}{C}-1},
	\end{equation*}
	and 
	\begin{equation*}
		z_{11}=\frac{1}{P^2+1} \to \frac{1}{(p^*)^2+1}=\frac{B^2(1-C)}{B^2-2B+1/C}\leq B^2 C.
	\end{equation*}
	Next, we want to obtain the convergence rate for $z_{11}$. Let $h(x)=\frac{1}{x^2+1}$ and $h'(x)=\frac{-2x}{(x^2+1)^2}$. Since $|h'(x)|$ is decreasing on $(\sqrt{3}/3,\infty)$, when 
	\begin{equation*}
		B<\frac{1}{\sqrt{\frac{C}{3}-\frac{C^2}{3}}+C},
	\end{equation*}
	we have $p^*>\frac{\sqrt{3}}{3}$. By Mean Value Theorem, for all $p$ such that $\sqrt{3}/3<p<p^*$, we have 
	\begin{equation*}
		|h(P)-h(p^*)|\leq |h'(p)||P-p^*|
	\end{equation*}
	for sufficiently large $n$. Therefore, we have
	\begin{align}\label{triangle inequality}
		|P-p^*|\leq \left|P-\frac{1/BC-1}{Q} \right|+\left(\frac{1}{BC}-1\right)\left|\frac{1}{Q}-\frac{1}{q^*} \right|.
	\end{align}
	When $C<\frac{3}{4}$, we have $q^*>\frac{\sqrt{3}}{3}$, and since $z_{n+1,n+1}=h(Q), C=h(q^*)$, the Mean Value Theorem gives us 
	\begin{equation}
		BCn^{\delta-1}\geq |z_{n+1,n+1}-C|=|h(Q)-h(q^*)|\geq |h'(2q^*)|\cdot |Q-q^*|
	\end{equation}
	for sufficiently large $n$. Hence, $|Q-q^*|=O(n^{\delta-1})$. Since $Q\to q^*$, the second term in $(\ref{triangle inequality})$ is of order $O(n^{\delta-1})$. For the first term in $(\ref{triangle inequality})$, we have 
	\begin{align*}
		\left|P-\frac{1/BC-1}{Q}\right| &=\frac{(PQ+1)/BC}{Q}\cdot\left|\frac{1}{PQ+1}-BC \right|\\
		&=\frac{(PQ+1)/BC}{Q}\cdot\left|z_{1,n+1}-BC \right|\\
		&=O(n^{\delta-1}).
	\end{align*} 
	This is because both $P$ and $Q$ converge as $n\to\infty$ and $(\ref{bound on z11})$. Thus $|P-p^*|=O(n^{\delta-1})$, and this completes the proof. 
\end{proof}
\section{Estimation on Total Variation Distance and Proof of Theorem \ref{Main Result} and \ref{Main results on Joint Distribution}}
In this section, we use concentration inequality to prove Theorem \ref{Main Result} and \ref{Main results on Joint Distribution}. The proof is verbatim to that of Theorem 2.1 in \cite{1} and Theorem $1$ in \cite{6}. First, we recall the following theorem by A. Barvinok. 
\begin{Th}$($\cite{4}$)$
	Fix row margin $\mathbf r=(r_1,\ldots, r_m)$ and column margin $\mathbf c=(c_1,\ldots,c_n)$. Let $Z=(z_{ij})_{1\leq i\leq m, 1\leq j\leq n}$ be the typical table for $\mathscr{M}^{\lbrace 0,1\rbrace}(\mathbf r,\mathbf c)$ and let $Y=(y_{ij})_{1\leq i\leq m,1\leq j\leq n}$ be the matrix with independent bernoulli random variables with $y_{ij}\sim \text{Ber}(z_{ij})$. Then we have the following:
	\begin{enumerate}
		\item There exists an absolute constant $\gamma$ such that,
	\begin{align}
		(mn)^{-\gamma(m+n)}e^{g(Z)}\leq \left|\mathscr{M}^{\lbrace 0,1\rbrace}(\mathbf r,\mathbf c)\right|\leq e^{g(Z)}.
	\end{align}
	\item Conditioned on being in $\mathscr{M}^{\lbrace 0,1\rbrace}(\mathbf r,\mathbf c)$, the matrix $Y$ is uniform distributed on $\mathscr{M}^{\lbrace 0,1\rbrace}(\mathbf r,\mathbf c)$. In other words, the probability mass function of $Y$ is constant on the set $\mathscr{M}^{\lbrace 0,1\rbrace}(\mathbf r,\mathbf c)$. More precisely, for any $D\in \mathscr{M}^{\lbrace 0,1\rbrace}(\mathbf r,\mathbf c)$, 
	\begin{align}
		\mathbb P(Y=D)=e^{-g(Z)}.
	\end{align}
	\item There exists some absolute constant $\gamma>0$ such that,  
	\begin{equation}
		\mathbb P\left(Y\in \mathscr{M}^{\lbrace 0,1\rbrace}(\mathbf r,\mathbf c)\right)=e^{-g(Z)}\cdot \left|\mathscr{M}^{\lbrace 0,1\rbrace}(\mathbf r,\mathbf c)\right|\geq (mn)^{-\gamma(m+n)}.
	\end{equation}
	\end{enumerate}
\end{Th}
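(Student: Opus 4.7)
The plan is to establish (ii) first by a direct computation from the Lagrange multiplier characterization of $Z$; parts (i) and (iii) are then essentially equivalent once (ii) is in hand, with the upper bound in (i) trivial and the genuine content concentrated in the lower bound on $\mathbb{P}(Y\in\mathscr{M}^{\{0,1\}}(\mathbf{r},\mathbf{c}))$.

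For (ii), I would fix any $D=(d_{ij})\in\mathscr{M}^{\{0,1\}}(\mathbf{r},\mathbf{c})$ and, using independence of the Bernoulli entries of $Y$, expand
\begin{equation*}
\log\mathbb{P}(Y=D)=\sum_{i,j}d_{ij}\log\frac{z_{ij}}{1-z_{ij}}+\sum_{i,j}\log(1-z_{ij}).
\end{equation*}
The Lagrange multiplier condition $\log\frac{z_{ij}}{1-z_{ij}}=-(\lambda_i+\mu_j)$ recalled in the preceding section collapses the first sum to $-\sum_i\lambda_i r_i-\sum_j\mu_j c_j$, which depends only on the margins and not on $D$. This already yields uniformity of $Y$ on $\mathscr{M}^{\{0,1\}}(\mathbf{r},\mathbf{c})$ conditional on lying there. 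To identify the common value as $e^{-g(Z)}$, I would substitute $r_i=\sum_j z_{ij}$ and $c_j=\sum_i z_{ij}$ (since $Z$ itself lies in the transportation polytope), which converts $-\sum_i\lambda_i r_i-\sum_j\mu_j c_j$ into $\sum_{i,j}z_{ij}\log\frac{z_{ij}}{1-z_{ij}}$, and combining with $\sum_{i,j}\log(1-z_{ij})$ reproduces exactly $-g(Z)$.

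Once (ii) is proved, the upper bound in (i) is immediate from $|\mathscr{M}^{\{0,1\}}(\mathbf{r},\mathbf{c})|\,e^{-g(Z)}=\mathbb{P}(Y\in\mathscr{M}^{\{0,1\}}(\mathbf{r},\mathbf{c}))\le 1$, and the matching lower bound in (i) is simply (iii) rephrased through the same identity. For (iii), I would invoke a local central limit theorem for the $(m+n-1)$-dimensional lattice-valued random vector whose coordinates are the row sums and column sums of $Y$ (the linear relation $\sum_i r_i=\sum_j c_j$ removes one redundant coordinate). Because $Z$ satisfies the margin constraints, the mean of this vector equals the target $(\mathbf{r},\mathbf{c})$, so we are asking for the probability of hitting the mode. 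Fourier inversion writes this probability as an integral over $\mathbb{T}^{m+n-1}$ of the characteristic function
\begin{equation*}
\prod_{i,j}\bigl(1-z_{ij}+z_{ij}\,e^{i(\theta_i+\varphi_j)}\bigr),
\end{equation*}
and a saddle-point analysis near $\theta=\varphi=0$ produces a Gaussian main term of order $\det(\Sigma)^{-1/2}$, where $\Sigma$ is the covariance matrix of the constraint vector; crude lower bounds on the variances $z_{ij}(1-z_{ij})$ then ensure this main term is at least $(mn)^{-O(m+n)}$.

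The hard part is the tail estimate: one must show that away from the origin the modulus of the characteristic function is uniformly small enough that the oscillatory contribution cannot drown the Gaussian main term. This is the heart of the argument in \cite{4} and rests on quantitative anticoncentration for sums of independent Bernoullis, combined with an $\ell^\infty$ control of the Lagrange multipliers $\lambda_i,\mu_j$ that keeps each $z_{ij}$ bounded away from $0$ and $1$ on the relevant scale. In a full write-up I would import those tail bounds verbatim rather than rederive them, since the deliberately loose polynomial factor $(mn)^{-\gamma(m+n)}$ in the statement is engineered precisely to absorb the accumulated losses from this step.
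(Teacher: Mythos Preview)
The paper does not supply a proof of this theorem at all: it is stated with attribution to Barvinok \cite{4} and used as a black box in the subsequent arguments. There is therefore nothing in the present paper to compare your proposal against.

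On its own merits, your treatment of (ii) is correct and is indeed the standard computation: the Lagrange multiplier form $\log\frac{1-z_{ij}}{z_{ij}}=\lambda_i+\mu_j$ forces the log-likelihood of any $D$ with the prescribed margins to depend only on $(\mathbf r,\mathbf c)$, and your identification of the constant as $-g(Z)$ via $r_i=\sum_j z_{ij}$, $c_j=\sum_i z_{ij}$ is exactly right. The reduction of (i) and (iii) to a single lower bound on $\mathbb P(Y\in\mathscr{M}^{\{0,1\}}(\mathbf r,\mathbf c))$ is also correct.

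For the lower bound itself, your local-CLT/Fourier sketch is a legitimate route and is close in spirit to the Barvinok--Hartigan Gaussian heuristic, but be aware that Barvinok's original proofs of the crude bound $(mn)^{-\gamma(m+n)}$ in \cite{3,4} proceed differently, via permanent-type inequalities and a random-rounding argument rather than a full saddle-point analysis with tail control. Your caveat that the tail estimate requires the $z_{ij}$ to be bounded away from $0$ and $1$ is important and is precisely where a naive Fourier argument can fail without additional input; since you explicitly plan to import that step from \cite{4}, the proposal is acceptable as a sketch, but it would be more honest to simply cite the result, as the paper does.
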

\begin{Rem}
	For fixed measurable set $\mathscr A\subseteq [0,1]^{mn}$, we have the following transformation of mass inequality: 
	\begin{align}\label{transport inequality}
		\mathbb P(Y\in \mathscr{A})\geq\mathbb P\left(Y\in \mathscr A|Y\in \mathscr{M}^{\lbrace 0,1\rbrace}(\mathbf r,\mathbf c)\right)\cdot \mathbb P\left(Y\in \mathscr{M}^{\lbrace 0,1\rbrace}(\mathbf r,\mathbf c)\right)\geq \mathbb P(X\in \mathscr A)\cdot (mn)^{-\gamma(m+n)}.
	\end{align}
\end{Rem}
Next, we want to obtain an estimate on the total variation distance between entries of $X$ and that of $Y$. 
\begin{Lemma}\label{Bound on total variation distance}
	Let $X=(X_{ij})$ be uniformly distributed on $\mathscr M^{\lbrace 0,1\rbrace}_{n,\delta}(B,C)$ and let $Z=(z_{ij})$ be the typical table for $\mathscr M^{\lbrace 0,1\rbrace}_{n,\delta}(B,C)$. Let $Y=(Y_{ij})$ be the matrix of independent Bernoulli random variables with mean $z_{ij}$, i.e., $Y_{ij}\sim \ber(z_{ij})$. Then, for any fixed $\varepsilon>0$, we have that
		\begin{align}
		\begin{split}
			\begin{cases}
				d_{TV}(X_{11}, Y_{11})=O\left(n^{\frac{1}{2}-\delta+\varepsilon}\right),\\
				d_{TV}(X_{1,n+1}, Y_{1,n+1})=O\left(n^{-\frac{\delta}{2}+\varepsilon}\right),\\
				d_{TV}(X_{n+1,1}, Y_{n+1, 1})=O\left(n^{-\frac{\delta}{2}+\varepsilon}\right),\\
				d_{TV}(X_{n+1,n+1}, Y_{n+1,n+1})=O\left(n^{-\frac{1}{2}+\varepsilon}\right).
			\end{cases}
		\end{split}
	\end{align} 
\end{Lemma}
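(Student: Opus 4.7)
The plan is to exploit Barvinok's theorem (ii), which identifies the uniform law on $\mathscr M^{\{0,1\}}_{n,\delta}(B,C)$ with the law of the independent Bernoulli matrix $Y$ conditioned on $\{Y\in\mathscr M^{\{0,1\}}_{n,\delta}(B,C)\}$. Since $X_{ij}$ and $Y_{ij}$ are both $\{0,1\}$-valued, we have $d_{TV}(X_{ij},Y_{ij})=2|\mathbb P(X_{ij}=1)-z_{ij}|$, and writing
\begin{equation*}
    \mathbb P(X_{ij}=1)=z_{ij}\cdot\frac{\mathbb P\bigl(Y\in\mathscr M\,\bigm|\,Y_{ij}=1\bigr)}{\mathbb P(Y\in\mathscr M)}
\end{equation*}
reduces the task to controlling the ratio $\mathbb P(Y\in\mathscr M\mid Y_{ij}=1)/\mathbb P(Y\in\mathscr M\mid Y_{ij}=0)$. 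Decomposing $\{Y\in\mathscr M\}=\bigcap_s\{R_s=\widetilde r_s\}\cap\bigcap_t\{C_t=\widetilde c_t\}$ and writing $R_i=Y_{ij}+R_i'$, $C_j=Y_{ij}+C_j'$, conditioning on $Y_{ij}=k$ only shifts the targets on $R_i'$ and $C_j'$ by $k$ while leaving every other row and column sum untouched.

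Next, I would invoke a multivariate local central limit theorem for the joint lattice law of the vector $(R_1,\ldots,R_{[n^\delta]+n},C_1,\ldots,C_{[n^\delta]+n})$ under the product-Bernoulli measure with means $z_{ij}$. By Lemma \ref{Asymptotic of typical tables} and \eqref{Asymptotic of top and bottom right}, the per-block variances $z_{ij}(1-z_{ij})$ are bounded away from $0$ and $1$, so that each row/column sum has variance of the order of its length. This yields a Gaussian density of the form $(2\pi)^{-d/2}(\det\Sigma)^{-1/2}e^{-\|x-\mu\|_{\Sigma^{-1}}^2/2}$ at the target point, up to a multiplicative error $1+O(n^{-\varepsilon})$, and the ratio above becomes $1+O(\sigma_{ij}^{-1}n^\varepsilon)$, where $\sigma_{ij}$ is the effective standard deviation of the relevant coordinate after block-decomposing $\Sigma$.

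The scale identification accounts for the four different rates. For $Y_{n+1,n+1}$ the relevant row and column sums have length $n+[n^\delta]$ and variance $\Theta(n)$, giving $\sigma\sim\sqrt{n}$ and the $n^{-1/2+\varepsilon}$ bound. For the side-block entries $Y_{1,n+1},Y_{n+1,1}$, the short side of the incident big row (or big column) has length $[n^\delta]$, so that conditioning on $Y_{ij}$ perturbs at the scale $\sigma\sim n^{\delta/2}$, giving $n^{-\delta/2+\varepsilon}$. For the top-left entry $Y_{11}$, after the side-block constraints have localised the sums at the Gaussian scale, the residual fluctuation in the small $[n^\delta]\times[n^\delta]$ corner is $\Theta(n^{\delta-1/2})$, so the effective $\sigma\sim n^{\delta-1/2}$, giving $n^{1/2-\delta+\varepsilon}$.

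The main obstacle is to justify the multivariate local CLT with these explicit block-dependent error rates, since the covariance matrix $\Sigma$ of the $(R_s,C_t)$-vector is degenerate (the identity $\sum_s R_s=\sum_t C_t$ cuts one dimension) and involves four different variance scales across the blocks of the typical table. I would follow the strategy of \cite{1} and \cite{6}: diagonalise $\Sigma$ block by block using the explicit form of $z_{ij}$ from \eqref{P,Q change of variables} and Lemma \ref{Asymptotic of typical tables}, apply a Berry-Esseen lattice estimate in each diagonalised coordinate, and then combine them with a union-bound over the shifts in $R_i'$ and $C_j'$ induced by $Y_{ij}\in\{0,1\}$. Once this local CLT is in place, the four bounds in the lemma follow by matching the shift of $1$ to the correct $\sigma_{ij}$, with the $n^\varepsilon$ factor absorbing polylogarithmic losses in the Berry-Esseen remainder.
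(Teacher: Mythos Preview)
Your approach is genuinely different from the paper's, and it carries a real gap. The paper's argument is far more elementary: for each block it uses exchangeability to write $\mathbb P(X_{ij}\in\mathcal A)$ as the expectation of an empirical average over all $N$ entries in that block, applies Azuma--Hoeffding to the independent $Y$-version to get concentration $\exp(-2t^2 N)$, and then transfers this to $X$ via the crude inequality
\[
\mathbb P(X\in\mathscr A)\le (mn)^{\gamma(m+n)}\,\mathbb P(Y\in\mathscr A),
\]
which comes directly from Barvinok's lower bound $\mathbb P(Y\in\mathscr M)\ge (mn)^{-\gamma(m+n)}$. The stated rates $n^{1/2-\delta+\varepsilon}$, $n^{-\delta/2+\varepsilon}$, $n^{-1/2+\varepsilon}$ arise purely from the block sizes $N=[n^\delta]^2,\,[n^\delta]n,\,n^2$: one chooses $t$ so that $t^2 N$ beats the $n\log n$ in the exponent of the transfer factor. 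No local CLT is used or needed.

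Your route, by contrast, leans entirely on a multivariate local CLT for the $\Theta(n)$-dimensional vector of all row and column sums under the product-Bernoulli law. This is not a technicality: it is essentially the Barvinok--Hartigan asymptotic formula, and cannot be obtained by ``Berry--Esseen in each diagonalised coordinate'' since the $R_s$ and $C_t$ share the same $Y_{ij}$'s and are genuinely correlated. More concretely, your scale identification does not match the lemma. For the side entry $Y_{1,n+1}$, both row $1$ and column $n+1$ have length $n+[n^\delta]$ and variance $\Theta(n)$; there is no ``short side of length $[n^\delta]$'' incident to that entry, so your heuristic for $\sigma\sim n^{\delta/2}$ is unfounded. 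Likewise the claim that the residual top-left fluctuation is $\Theta(n^{\delta-1/2})$ is asserted without justification. In the paper's proof these exponents are not variance scales at all; they are exactly $\sqrt{(n\log n)/N}$ for the three block sizes $N$. If a sharp local CLT could be carried out here it would likely give \emph{better} bounds than those stated, but your sketch does not establish the rates in the lemma and leaves its hardest ingredient unproved.
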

\begin{proof}
	Fix a measurable set $\mathcal A\subseteq [0,\infty)$. By exchangeability of entries in the top left block and Azuma-Hoeffding inequality, we have that
	\begin{align*}
		\mathbb P\left(\left|\frac{1}{[n^\delta]^2}\sum_{1\leq i\leq [n^\delta]}\sum_{1\leq j\leq [n^\delta]}\mathbbm{1}_{\lbrace Y_{ij}\in \mathcal A\rbrace}-\mathbb P\left(Y_{11}\in \mathcal A\right) \right|>t\right)\leq \exp\left(-2t^2 [n^\delta]^2 \right).
	\end{align*}
	Moreover, by $(\ref{transport inequality})$, 
	\begin{align*}
		&\mathbb P\left(\left|\frac{1}{[n^\delta]^2}\sum_{1\leq i\leq [n^\delta]}\sum_{1\leq j\leq [n^\delta]}\mathbbm{1}_{\lbrace X_{ij}\in \mathcal A\rbrace}-\mathbb P\left(Y_{11}\in \mathcal A\right) \right|>t\right)\\
		&\leq \left(n+[n^\delta]\right)^{\gamma'(n+[n^\delta])}\cdot \exp\left(-2t^2 \left([n^\delta]\right)^2 \right)
	\end{align*}
	for some absolute constant $\gamma'>0$. Next, we have that
	\begin{align*}
		&\left|\mathbb P(X_{11}\in \mathcal A)-\mathbb P(Y_{11}\in \mathcal A) \right|\\
		&=\left|\mathbb E\left[\frac{1}{[n^\delta]^2}\sum_{1\leq i,j\leq [n^\delta]}\mathbbm{1}_{\lbrace X_{ij}\in\mathcal A\rbrace}\right]-\mathbb P(Y_{11}\in \mathcal A) \right|\\
		&\leq \mathbb E\left[\left|\frac{1}{[n^\delta]^2}\sum_{1\leq i,j\leq [n^\delta]}\mathbbm{1}_{\lbrace X_{ij}\in\mathcal A\rbrace}-\mathbb P(Y_{11}\in \mathcal A)\right|\right]\\
		&\leq t\mathbb P\left(\left|\frac{1}{[n^\delta]^2}\sum_{1\leq i,j\leq [n^\delta]}\mathbbm{1}_{\lbrace X_{ij}\in\mathcal A\rbrace}-\mathbb P(Y_{11}\in \mathcal A)\right|\leq t\right)\\
		&+2\mathbb P\left(\left|\frac{1}{[n^\delta]^2}\sum_{1\leq i,j\leq [n^\delta]}\mathbbm{1}_{\lbrace X_{ij}\in \mathcal A\rbrace}-\mathbb P\left(Y_{11}\in \mathcal A\right) \right|>t\right)\\
		&\leq t+2\left(n+[n^\delta]\right)^{\gamma'(n+[n^\delta])}\cdot \exp\left(-2t^2 [n^\delta]^2 \right).
	\end{align*}
	Fix $\varepsilon>0$. Let $t=n^{\frac{1}{2}-\delta+\varepsilon}$, and we have 
	\begin{align}
		\left|\mathbb P(X_{11}\in \mathcal A)-\mathbb P(Y_{11}\in \mathcal A) \right|=O\left(n^{\frac{1}{2}-\delta+\varepsilon}\right).
	\end{align}
	By the exact same method, 
	\begin{align*}
		\left|\mathbb P(X_{1,n+1}\in \mathcal A)-\mathbb P(Y_{1,n+1}\in \mathcal A) \right|\leq t+2\left(n+[n^\delta]\right)^{\gamma{''}(n+[n^\delta])}\cdot \exp\left(-2t^2\cdot [n^\delta]\cdot n \right).
	\end{align*}
	Let $t=n^{-\frac{\delta}{2}+\varepsilon}$ and we have 
	\begin{align}
		\left|\mathbb P(X_{1,n+1}\in \mathcal A)-\mathbb P(Y_{1,n+1}\in \mathcal A) \right|=O\left(n^{-\frac{\delta}{2}+\varepsilon} \right).
	\end{align}
	Finally, 
	\begin{align*}
		\left|\mathbb P(X_{n+1,n+1}\in \mathcal A)-\mathbb P(Y_{n+1,n+1}\in \mathcal A) \right|\leq t+2\left(n+[n^\delta]\right)^{\gamma{'''}(n+[n^\delta])}\cdot \exp\left(-2t^2\cdot n^2  \right).
	\end{align*}
	Let $t=n^{-\frac{1}{2}+\varepsilon}$ and we have 
	\begin{align}
		\left|\mathbb P(X_{n+1,n+1}\in \mathcal A)-\mathbb P(Y_{n+1,n+1}\in \mathcal A) \right|=O\left(n^{-\frac{1}{2}+\varepsilon} \right).
	\end{align}
	Therefore, 
	\begin{align*}
		\begin{split}
			\begin{cases}
				d_{TV}(X_{11}, Y_{11})=O\left(n^{\frac{1}{2}-\delta+\varepsilon}\right),\\
				d_{TV}(X_{1,n+1}, Y_{1,n+1})=O\left(n^{-\frac{\delta}{2}+\varepsilon}\right),\\
				d_{TV}(X_{n+1,1}, Y_{n+1, 1})=O\left(n^{-\frac{\delta}{2}+\varepsilon}\right),\\
				d_{TV}(X_{n+1,n+1}, Y_{n+1,n+1})=O\left(n^{-\frac{1}{2}+\varepsilon}\right).
			\end{cases}
		\end{split}
	\end{align*}
	This completes the proof. 
\end{proof}
Next, we prove the Theorem $\ref{Main Result}$.
\begin{proof}[Proof of Theorem $\ref{Main Result}$]
	By $(\ref{total variation distance between Bernoulli random variable})$, Lemma $\ref{Asymptotic of typical tables}$ and $(\ref{Asymptotic of top and bottom right})$, we have
	\begin{align}\label{Distance between Y and scalar}
	\begin{split}
	\begin{cases}
		d_{TV}\left(\ber(z_{n+1,n+1}),\ber(C)\right)=2|z_{n+1,n+1}-C|=O(n^{\delta-1}),\\
		d_{TV}\left(\ber(z_{1,n+1}),\ber(BC)\right)=2|z_{1,n+1}-BC|=O(n^{\delta-1}),\\
		d_{TV}\left(\ber(z_{n+1,1}),\ber(BC)\right)=2|z_{n+1,1}-BC|=O(n^{\delta-1}),\\
		d_{TV}\left(\ber(z_{11}),\ber\left(\frac{B^2(1-C)}{B^2-2B+1/C} \right) \right)=2\left|z_{11}-\frac{B^2(1-C)}{B^2-2B+1/C}\right|=O(n^{\delta-1}).
	\end{cases}
	\end{split}
	\end{align}
	By Lemma $\ref{Bound on total variation distance}$, we have
	\begin{align}
		\begin{split}
			\begin{cases}
				d_{TV}(X_{11}, Y_{11})=O\left(n^{\frac{1}{2}-\delta+\varepsilon}\right),\\
				d_{TV}(X_{1,n+1}, Y_{1,n+1})=O\left(n^{-\frac{\delta}{2}+\varepsilon}\right),\\
				d_{TV}(X_{n+1,1}, Y_{n+1, 1})=O\left(n^{-\frac{\delta}{2}+\varepsilon}\right),\\
				d_{TV}(X_{n+1,n+1}, Y_{n+1,n+1})=O\left(n^{-\frac{1}{2}+\varepsilon}\right).
			\end{cases}
		\end{split}
	\end{align}
	Hence, by triangle inequality, we have
	\begin{align*}
		d_{TV}(X_{n+1,n+1},\ber(C)) &\leq d_{TV}\left(X_{n+1,n+1},\ber \left(z_{n+1,n+1}\right)\right)+d_{TV}\left(\ber(z_{n+1,n+1}),\ber(C)\right)\\
		&=O\left(n^{\delta-1}+n^{-\frac{1}{2}+\varepsilon} \right),
	\end{align*}
	\begin{align*}
		d_{TV}(X_{1,n+1},\ber(BC)) &\leq d_{TV}\left(X_{1,n+1},\ber \left(z_{1,n+1}\right)\right)+d_{TV}\left(\ber(z_{1,n+1}),\ber(BC)\right)\\
		&=O\left(n^{\delta-1}+n^{-\frac{\delta}{2}+\varepsilon} \right),
	\end{align*}
	\begin{align*}
		d_{TV}(X_{n+1,1},\ber(BC)) &\leq d_{TV}\left(X_{n+1,1},\ber \left(z_{n+1,1}\right)\right)+d_{TV}\left(\ber(z_{n+1,1}),\ber(BC)\right)\\
		&=O\left(n^{\delta-1}+n^{-\frac{\delta}{2}+\varepsilon} \right),
	\end{align*}
	and
	\begin{align*}
		&d_{TV}\left(X_{11},\ber\left(\frac{B^2(1-C)}{B^2-2B+1/C} \right)\right)\\
	    &\leq d_{TV}\left(X_{11},\ber \left(z_{11}\right)\right)+d_{TV}\left(\ber(z_{11}),\ber\left(\frac{B^2(1-C)}{B^2-2B+1/C} \right)\right)\\
		&=O\left(n^{\delta-1}+n^{\frac{1}{2}-\delta+\varepsilon} \right).
	\end{align*}
	This completes the proof. 
\end{proof}
Using the similar techniques, we can study the joint distribution of entries in each block. 
\begin{proof}[Proof of Theorem \ref{Main results on Joint Distribution}]
	We first prove $(1)$. For $k=k(n)$, let $\mathcal A\subseteq \mathbb R^k$ be a measurable subset and let
	\begin{equation*}
		 \mathscr X^{(\ell)}=\lbrace (i^{(\ell)}_1, j^{(\ell)}_1),\ldots,(i^{(\ell)}_k, j^{(\ell)}_k): [n^\delta]+1\leq i_r^{(\ell)}, j_r^{(\ell)}\leq [n^\delta]+n \rbrace
	\end{equation*} 
	 be a $k$-subset of indices of bottom right block. Here $\mathscr X^{(\ell)}\cap \mathscr X^{(\ell')}=\emptyset$ if $1\leq \ell\neq \ell'\leq [n^2/k]$. In other words, we divide the bottom right block into $[n^2/k]$ disjoint subsets, each with cardinality $k$. 
	 \par
	 Let $X^{(\ell)}=(X_{(i^{(\ell)}_1, j^{(\ell)}_1)},\ldots,  X_{(i^{(\ell)}_k, j^{(\ell)}_k)})$ be a random vector of $k$ entries (indexed by $\mathscr X^{(\ell)}$) in the bottom right block of $X$. By symmetry, $X^{(\ell)}$ has the same distribution with $X^{(\ell')}$ for all $1\leq \ell,\ell'\leq [n^2/k]$. Similarly, let $Y^{(\ell)}=(Y_{(i^{(\ell)}_1, j^{(\ell)}_1)},\ldots,  Y_{(i^{(\ell)}_k, j^{(\ell)}_k)})$, where $Y=(Y_{ij})$ is the matrix of independent bernoulli random variables with $Y_{ij}\sim\ber(z_{ij})$. By Azuma-Hoeffding inequality, 
	\begin{align*}
		\mathbb P\left(\left|\frac{1}{[n^2/k]}\sum_{\ell=1}^{[n^2/k]}\mathbbm{1}_{\lbrace X^{(\ell)}\in \mathcal A\rbrace}-\mathbb P\left(Y^{(1)}\in \mathcal A \right)\right|> \frac{1}{2}\varepsilon \right)\leq c'\exp (n\log n)\cdot\exp\left(-\frac{\varepsilon^2}{8}\cdot [n^2/k]^2 \right)
	\end{align*}
	for sufficiently large $n$. Hence, when $k=o\left(\frac{n}{\log n}\right)$, 
	\begin{equation*}
		\mathbb P\left(\left|\frac{1}{[n^2/k]}\sum_{\ell=1}^{[n^2/k]}\mathbbm{1}_{\lbrace X^{(\ell)}\in \mathcal A\rbrace}-\mathbb P\left(Y^{(1)}\in \mathcal A \right)\right|> \frac{1}{2}\varepsilon \right)=o(1).
	\end{equation*}
	Since $X^{(\ell)}$ has the same distribution for all $1\leq \ell\leq [n^2/k]$, we have
	\begin{equation*}
		\left|\mathbb P(X^{(1)}\in \mathcal A)-\mathbb P(Y^{(1)}\in \mathcal A) \right|\leq \frac{1}{2}\varepsilon+o(1).
	\end{equation*}
	Next, 
	\begin{align*}
		d_{TV}\left(Y^{(1)}, V_k(C)\right) &\leq k\cdot d_{TV}\left(\ber(z_{n+1,n+1}),\ber(C) \right)\\
		&=2k|z_{n+1,n+1}-C|
	\end{align*}
	By $(\ref{Distance between Y and scalar})$, $|z_{n+1,n+1}-C|=O(n^{\delta-1})$. Hence, when $k=o\left(n^{1-\delta}\right)$, we have 
	\begin{equation*}
		d_{TV}(Y^{(1)}, V_k(C))=o(1),\qquad\text{and}\qquad d_{TV}(X^{(1)}, Y^{(1)})=o(1)
	\end{equation*}
	By triangle inequality, $(1)$ is proved. Statements $(2)$ and $(3)$ follow from the exact same reasoning, hence details are omitted. 
\end{proof}
A direct consequence of Theorem $\ref{Main results on Joint Distribution}$ is the following corollary. To give the statement, let $J=J(n,\delta,B,C)=(J_{ij})$ be the matrix of independent Bernoulli random variables such that 
\begin{align*}
	J_{ij}\sim 
	\begin{cases}
		\ber(C) &\qquad\text{for $1+[n^\delta]\leq i,j\leq n+[n^\delta]$,}\\
		\ber(BC) &\qquad\text{for $1\leq i\leq [n^\delta]$ and $[n^\delta]+1\leq j\leq [n^\delta]+n$,}\\
		\ber(BC) &\qquad\text{for $1\leq j\leq [n^\delta]$ and $[n^\delta]+1\leq i\leq [n^\delta]+n$,}\\
		\ber\left(\frac{B^2(1-C)}{B^2-2B+1/C}\right) &\qquad\text{for $1\leq i,j\leq [n^\delta]$.}
	\end{cases}
\end{align*}
\begin{Cor}
	Let $(i_1,j_1),\ldots,(i_L,j_L)$ be a fixed sequence of pairs of positive integers and let $\alpha_1,\ldots,\alpha_L$ be a fixed  sequence of positive integers. Under the exact same conditions as Theorem $\ref{Main results on Joint Distribution}$, we have
	\begin{equation}\label{moment convergence}
		\mathbb E\left[\prod_{k=1}^L X_{i_k,j_k}^{\alpha_k}\right]\to \mathbb E\left[ \prod_{k=1}^L J_{i_k,j_k}^{\alpha_k} \right]
	\end{equation}
	if $1\leq i_k,j_k\leq [n^\delta]$ or $[n^\delta]+1\leq i_k,j_k\leq [n^\delta]+n$ or $1\leq i_k\leq [n^\delta]$ and $[n^\delta]+1\leq j_k\leq [n^\delta]+n$ or $1\leq j_k\leq [n^\delta]$ and $[n^\delta]+1\leq i_k\leq [n^\delta]+n$. In other words, $(i_k,j_k)$ are in the same block for all $1\leq k\leq L$. 
\end{Cor}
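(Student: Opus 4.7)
The plan is to reduce the statement to Theorem \ref{Main results on Joint Distribution} via the elementary observation that for $\{0,1\}$-valued random variables every positive integer power acts as the identity. First I would note that for each $k$,
\begin{equation*}
X_{i_k,j_k}^{\alpha_k}=X_{i_k,j_k}\qquad\text{and}\qquad J_{i_k,j_k}^{\alpha_k}=J_{i_k,j_k},
\end{equation*}
since $\alpha_k\geq 1$ and both $X_{i_k,j_k}, J_{i_k,j_k}\in\{0,1\}$. Consequently
\begin{equation*}
\prod_{k=1}^{L} X_{i_k,j_k}^{\alpha_k}=\mathbbm{1}_{\{X_{i_1,j_1}=1,\ldots,X_{i_L,j_L}=1\}},
\end{equation*}
and analogously for $J$, so the left-hand side of \eqref{moment convergence} is simply the probability that a specific $L$-tuple of entries are simultaneously equal to $1$.

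Next I would invoke Theorem \ref{Main results on Joint Distribution} with $k=L$ held fixed. Since $L$ is a constant independent of $n$, the growth conditions $k=o(n^{1-\delta})$ (bottom-right block), $k=o(n^{2\delta-1}/\log n)$ (top-left block), and $k=o(n^\delta/\log n)$ (side blocks) are trivially satisfied for any admissible $\delta$. Depending on which block contains all the pairs $(i_k,j_k)$, one selects the corresponding part $(i)$, $(ii)$, or $(iii)$ of the theorem to conclude
\begin{equation*}
d_{TV}\bigl((X_{i_1,j_1},\ldots,X_{i_L,j_L}),\,V_L(\gamma)\bigr)\longrightarrow 0,
\end{equation*}
where $\gamma\in\{C,\,BC,\,B^2(1-C)/(B^2-2B+1/C)\}$ is the appropriate Bernoulli parameter for that block. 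A minor subtlety is that Theorem \ref{Main results on Joint Distribution} is stated for contiguous runs of indices (e.g.\ entries of a single row), but by exchangeability of the entries within any block one may relabel the selected $L$ positions without affecting the joint distribution, so the bound applies to an arbitrary fixed $L$-tuple inside a block.

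Finally I would apply the general fact that convergence in total variation implies convergence of the probability of every fixed event; specializing to the atom $\{(1,1,\ldots,1)\}\subset\{0,1\}^L$ and using independence of the coordinates of $V_L(\gamma)$ gives
\begin{equation*}
\mathbb P\bigl(X_{i_1,j_1}=1,\ldots,X_{i_L,j_L}=1\bigr)\longrightarrow \gamma^{L}=\mathbb E\Bigl[\prod_{k=1}^{L}J_{i_k,j_k}\Bigr].
\end{equation*}
Combining this with the reduction in the first paragraph yields \eqref{moment convergence}. There is essentially no real obstacle; the mild bookkeeping step is keeping track of which block one is in so that the correct case of Theorem \ref{Main results on Joint Distribution} and the correct parameter $\gamma$ are invoked.
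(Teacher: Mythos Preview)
Your proposal is correct and is exactly the argument the paper has in mind: the corollary is stated immediately after Theorem~\ref{Main results on Joint Distribution} as ``a direct consequence'' with no separate proof, and your write-up simply makes that consequence explicit via the reduction $X_{i_k,j_k}^{\alpha_k}=X_{i_k,j_k}$, the application of Theorem~\ref{Main results on Joint Distribution} with $k=L$ fixed, and the observation that total-variation convergence controls the probability of the atom $(1,\ldots,1)$. The only cosmetic point is that your formula $\gamma^{L}$ tacitly assumes the $L$ index pairs are distinct; if some coincide, one first collapses repetitions (still using $x^{\alpha}=x$) and the same argument goes through with $\gamma^{L'}$ for the number $L'$ of distinct pairs.
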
  
\section{Proof of Theorem $\ref{Main results on LLN, CLT, side block}$ and $\ref{Main results on LLN, CLT, bottom right}$}
In this section, we prove the Theorem $\ref{Main results on LLN, CLT, side block}$ and $\ref{Main results on LLN, CLT, bottom right}$. Notice that similar results have been obtained in the non-negative integer case \cite{1}. We first obtain the explicit convergence rate for $(\ref{moment convergence})$. Notice that 
\begin{align}
\begin{split}
	\left|\mathbb E\left[\prod_{k=1}^L X_{i_k,j_k}^{\alpha_k}\right]- \mathbb E\left[ \prod_{k=1}^L J_{i_k,j_k}^{\alpha_k} \right]\right| &=\left|\mathbb P\left(\prod_{k=1}^L X_{i_k,j_k}^{\alpha_k}=1 \right)-\mathbb P\left(\prod_{k=1}^L J_{i_k,j_k}^{\alpha_k}=1 \right) \right|\\
	&=\left|\mathbb P\left(\prod_{k=1}^L X_{i_k,j_k}=1\right)- \mathbb P\left(\prod_{k=1}^L J_{i_k,j_k}=1\right)\right|\\
	&\leq d_{TV}\left(\prod_{k=1}^L X_{i_k,j_k}, \prod_{k=1}^L J_{i_k,j_k}\right)\\
	&\leq d_{TV}\left(\prod_{k=1}^L X_{i_k,j_k}, \prod_{k=1}^L Y_{i_k,j_k}\right)+d_{TV}\left(\prod_{k=1}^L Y_{i_k,j_k}, \prod_{k=1}^L J_{i_k,j_k}\right).
\end{split}
\end{align}
Recall that $Y=(Y_{ij})$ is the matrix of independent Bernoulli random variables with mean $z_{ij}$ and $Z=(z_{ij})$ is the typical table. By symmetry and $(\ref{Distance between Y and scalar})$, we have that 
\begin{equation}
	d_{TV}\left(\prod_{k=1}^L Y_{i_k,j_k}, \prod_{k=1}^L J_{i_k,j_k}\right)\leq L\cdot d_{TV}\left(Y_{i_1,j_1}, J_{i_1,j_1} \right)=O(n^{\delta-1}).
\end{equation}
For the $d_{TV}\left(\prod_{k=1}^L X_{i_k,j_k}, \prod_{k=1}^L Y_{i_k,j_k}\right)$, it can be shown that 
\begin{align}\label{Total variation distance between product}
	d_{TV}\left(\prod_{k=1}^L X_{i_k,j_k}, \prod_{k=1}^L Y_{i_k,j_k}\right)=O\left(n^{-\eta(\delta)+\varepsilon}\right),
\end{align}
where 
\begin{equation*}
	\eta(\delta)=
	\begin{cases}
		\frac{1}{2} &\qquad\text{if all of the $(i_k,j_k)'s$ are in the bottom right block,}\\
		\delta-\frac{1}{2} &\qquad\text{if all of the $(i_k,j_k)'s$ are in the top left block,}\\
		\frac{\delta}{2} &\qquad\text{otherwise.}
	\end{cases}
\end{equation*}
The proof of $(\ref{Total variation distance between product})$ is similar to the proof of Lemma $\ref{Bound on total variation distance}$ above and Theorem $6.1$ in \cite{1} so the details are omitted. Hence, 
\begin{equation}\label{difference between expectaions}
	\left|\mathbb E\left[\prod_{k=1}^L X_{i_k,j_k}^{\alpha_k}\right]- \mathbb E\left[ \prod_{k=1}^L J_{i_k,j_k}^{\alpha_k} \right]\right|=O\left(n^{\delta-1}+n^{-\eta(\delta)+\varepsilon} \right),
\end{equation}
where $\eta(\delta)$ is defined as above. Next, we prove Theorem $\ref{Main results on LLN, CLT, side block}$. The proof of Theorem $\ref{Main results on LLN, CLT, bottom right}$ follows from the exact same reasoning so we will not provide any details here. 
\begin{proof}[Proof of Theorem \ref{Main results on LLN, CLT, side block}]
	Let $\overline X_{1,[n^\delta]+k}=X_{1,[n^\delta]+k}-BC$ for all $1\leq k\leq n$, and let 
	\begin{align*}
		\overline S_{n,\delta}(B,C) &:=X_{1,[n^\delta]+1}+\ldots+X_{1,[n^\delta]+n}-BCn\\
		&=\overline X_{1,[n^\delta]+1}+\ldots+\overline X_{1,[n^\delta]+n}.
	\end{align*}
	By Markov's inequality, we have that
	\begin{align*}
		\mathbb P\left(\overline S_{n,\delta}(B,C)>t\right) &\leq \frac{1}{t^2}\mathbb E\left[\left(\sum_{k=1}^n \overline X_{1,k+[n^\delta]}\right)^2\right]\\
		&=\frac{1}{t^2}\mathbb E\left[\sum_{k=1}^n\overline X^2_{1,k+[n^\delta]}+2\sum_{[n^\delta]+1\leq k_1\neq k_2\leq [n^\delta]+n}\overline X_{1 k_1}\overline X_{1k_2}\right]\\
		&=\frac{1}{t^2}\left\lbrace n\mathbb E\left[\overline X_{1,n+1}^2 \right]+n(n-1)\mathbb E\left[\overline X_{1,n+1}\overline X_{1,n+2} \right] \right\rbrace.
	\end{align*}
	By $(\ref{difference between expectaions})$, we have
	\begin{equation*}
		\mathbb E\left[\overline X_{1,n+1}^2 \right]=BC-B^2C^2+O\left(n^{\delta-1}+n^{-\frac{\delta}{2}+\varepsilon} \right),
	\end{equation*}
	and 
	\begin{equation*}
		\mathbb E\left[\overline X_{1,n+1}\overline X_{1,n+2}\right]=O\left(n^{\delta-1}+n^{-\frac{\delta}{2}+\varepsilon} \right).
	\end{equation*}
	Therefore, when $0<\delta\leq \frac{1}{2}$, we have
	\begin{equation*}
		n\mathbb E\left[\overline X_{1,n+1}^2 \right]+n(n-1)\mathbb E\left[\overline X_{1,n+1}\overline X_{1,n+2} \right]=O\left(n^{2-\frac{\delta}{2}+\varepsilon} \right).
	\end{equation*}
	Hence, for all $\xi,\varepsilon>0$, there exists some constant $c'>0$ such that 
	\begin{equation*}
		\mathbb P\left(\overline S_{n,\delta}(B,C)>n^{1-\xi}\right)\leq c' n^{2\xi-\frac{\delta}{2}+\varepsilon} 
	\end{equation*}
	for sufficiently large $n$. If we choose $0<\xi<\frac{\delta}{4}$, then for some constants $c'',\xi'>0$, we have
	\begin{equation*}
		\mathbb P\left(\frac{\overline S_{n,\delta}(B,C)}{n}>n^{-\xi}\right)\leq c''n^{-\xi'}
	\end{equation*}
	for sufficiently large $n$. For any sequence $(n_k)_{k\geq 1}$ with $n_k\to \infty$ as $k\to \infty$, there exists a subsequence $(n_{k_r})_{r\geq 1}$ with $n_{k_r}\to \infty$ as $r\to \infty$ such that 
	\begin{equation*}
		\sum_{r=1}^\infty \mathbb P\left(\frac{\overline S_{n_{k_r},\delta}(B,C)}{n_{k_r}}>n_{k_r}^{-\xi} \right)<\infty.
	\end{equation*}
	By Borel-Cantelli Lemma,
	\begin{equation*}
		\frac{\overline S_{n_{k_r},\delta}(B,C)}{n_{k_r}}\to 0,	\qquad\text{as $r\to \infty$,}
	\end{equation*} 
	almost surely. Consequently, 
	\begin{equation*}
		\liminf_{n\to \infty}\frac{\overline S_{n,\delta}(B,C)}{n}=\limsup_{n\to \infty}\frac{\overline S_{n,\delta}(B,C)}{n}=0
	\end{equation*}
	almost surely. This completes the proof. 
\end{proof}
\section{Acknowledgement}
The author would like to thank Robin Pemantle for many helpful discussions.

\end{document}